\newcommand{\CC}{\mathcal{C}}
\newcommand{\ep}{\epsilon}
\newcommand{\om}{\omega}
\newcommand{\lan}{\langle}
\newcommand{\ra}{\rangle}
\newcommand{\ve}{\varepsilon}
\newcommand{\mS}{\mathcal{S}}
\newcommand{\Rep}{{\rm Rep}}
\newcommand{\zero}{{\mathbf 0}}
\newcommand{\la}{\lambda}
\newcommand{\ga}{\gamma}
\newcommand{\g}{\mathfrak{g}}
\newcommand{\ssl}{\mathfrak{sl}}
\newcommand{\so}{\mathfrak{so}}
\newcommand{\N}{\mathbb{N}}
\newcommand{\Q}{\mathbb Q}
\DeclareMathOperator{\FPdim}{FPdim}
\newcommand{\ma}{\mathbf{a}}
\newcommand{\md}{\mathbf{d}}
\DeclareMathOperator{\End}{End}  \DeclareMathOperator{\Res}{Res}
\DeclareMathOperator{\Hom}{Hom}
\newcommand{\one}{\mathbf{1}}
\newcommand{\C}{\mathbb C}
\newcommand{\Z}{\mathbb Z}
\newcommand{\ts}{\tilde{s}}
\newcommand{\ot}{\otimes}
\newcommand{\B}{\mathcal{B}}
\newcommand{\be}{\mathbf{e}}
\numberwithin{equation}{section}
\newtheorem{theorem}{Theorem}[section]
\newtheorem{cor}[theorem]{Corollary}
\newtheorem{lemma}[theorem]{Lemma}
\newtheorem{conj}[theorem]{Conjecture}
\newtheorem{prop}[theorem]{Proposition}
\theoremstyle{definition}
\newtheorem{remark}[theorem]{Remark}
\newtheorem{ex}[theorem]{Example}
\newtheorem{definition}[theorem]{Definition}
\begin{document}
\title[]
{Localization of unitary braid group representations}

\author{Eric C. Rowell}
\email{rowell@math.tamu.edu}
\address{Department of Mathematics\\
    Texas A\&M University \\
    College Station, TX 77843\\
    U.S.A.}

\author{Zhenghan Wang}
\email{zhenghwa@microsoft.com}
\address{Microsoft Station Q\\CNSI Bldg Rm 2237\\
    University of California\\
    Santa Barbara, CA 93106-6105\\
    U.S.A.}

\thanks{The first author is partially supported by NSA grant H98230-10-1-0215, and thanks C. Galindo and S.-M. Hong for useful correspondence.  Part of the research was carried out while the authors were visiting the Center for Advanced Study, Tsinghua University.}

\begin{abstract}
Governed by locality, we explore a connection between unitary braid group representations associated to a unitary $R$-matrix and to a simple object in a unitary braided fusion category.  Unitary $R$-matrices, namely unitary solutions to the Yang-Baxter equation, afford explicitly \emph{local} unitary representations of braid groups.  Inspired by topological quantum computation, we study whether or not it is possible to reassemble the irreducible summands appearing in the unitary braid group representations from a unitary braided fusion category with possibly different positive multiplicities to get representations that are uniformly equivalent to the ones from a unitary $R$-matrix.  Such an equivalence will be called a \emph{localization} of the unitary braid group representations.  We show that the $q=e^{\pi i/6}$ specialization of the unitary Jones representation of the braid groups can be localized by a unitary $9\times 9$ $R$-matrix.  Actually this Jones representation is the first one in a family of theories $(SO(N),2)$ for an odd prime $N>1$, which are conjectured to be localizable.  We formulate several general conjectures and
discuss possible connections to physics and computer science.
\end{abstract}
\maketitle

\section{Introduction}

One of the questions when a topological quantum field theory (TQFT) computational model was proposed \cite{Freedman1} is whether or not there are non-trivial unitary $R$-matrices, i.e., unitary solutions to the constant Yang-Baxter equation.  In particular, are there unitary $R$-matrices (with algebraic entries) whose representations will result in the Jones polynomial of links evaluated at the unitary roots $q=e^{\pm \frac{2\pi i}{r}}, r>3$?  (The question is not well-posed as there are different ways to define link invariants from representations.  We are thinking about the usual methods: weighted trace or determinant.)  The existence of such $R$-matrices would lead to an efficient approximation of the Jones evaluations by the quantum circuit model of quantum computing.  Though an efficient approximation of quantum invariants including Jones evaluations by quantum computing models exists \cite{FKLW,Wang}, the existence of such unitary $R$-matrices is still open.  We will address this question in this paper and explore its various ramifications.

Jones evaluations of links can be obtained from two representations of the braid groups:  the original unitary Jones representation from von Neumman algebras \cite{J86}, and the non-unitary representation from Drinfeld's $SU(2)$ $R$-matrix \cite{Turaev}.  While it is obvious that any $R$-matrix braid group representation is local, the unitary Jones representation cannot be local as in the case of $R$-matrix because the dimensions of the representation spaces for the braid group $\B_n$ are not $k^n$ for some fixed integer $k$.  It follows that any locality in the unitary Jones representation, if it exists, would be hidden.  Jones unitary representations were later subsumed into unitary TQFTs and the gluing formula of TQFTs can be considered as hidden locality of the Jones representations (see Remark \ref{defrems}).  The gluing formula in TQFTs as hidden locality underlies the simulation of TQFTs by quantum computers \cite{FKW}.

Interesting unitary $R$-matrices are very difficult to construct.  This observation reflects a conflict of explicit locality with unitarity in braid group representations.  The only systematic unitary solutions that we know of are permutations \cite{ESS}.  Smooth deformations of such solutions seem to be all trivial.  We believe that any smooth deformation $R(h)$ of the identity $R(0)=id$ on a Hilbert space $V$ with an orthonormal basis $\{e_i\},i=1,\ldots, m$ would be of the form $R(h)(e_i\otimes e_j)=e^{i h H_{ij}(h)} (e_i\otimes e_j)$ for some smooth family of Hermitian matrices $H_{ij}(h)$.

Topological quantum computation (TQC) (see \cite{Wang} for references) stirred great interests in unitary representations of the braid groups, which describe the statistics of quasi-particles in condensed matter physics \cite{DFN}.  In TQC, unitary matrices from braid generators serve as quantum gates to perform computational tasks.  Therefore, locality, unitarity, and universality of braid group representations are all important ingredients for the application to TQC.  The best source of unitary braid representations is unitary braided fusion category (UBFC) theory.  UBFCs can be constructed from any pair $(G,k)$, where $G$ is a semi-simple Lie group and $k\geq 1$ an integer, called a level.  Jones representations correspond to $(SU(2),k), k\geq 1$.  Most such representations lack an explicit locality: the dimensions of the $\B_n$-representation spaces are rarely of dimension $c^m$ for some fixed integer $c$ and $m$ an integer depending on $n$.  As we will show for Jones $SU(2)$ representations, a refined explicit locality can be achieved only for level $k=1,2,4$ after extra copies of the irreducible summands of the Jones representations are used.  As is known \cite{FLWu,FLW}, levels $k=1,2,4$ are also the only levels for which the braiding alone is not universal for TQC.  In fact, the failure of universality of the braiding is quite dramatic: the image of the braid group generates a \emph{finite group}.  Here we encounter another conflict: explicit locality with universality.

In this paper, we study a refined version of explicit locality inspired by TQC.
We conjecture that explicit local unitary TQFTs in the sense below are inherently braiding non-universal for TQC. i.e., such TQFTs only induce braid representations with finite image (called property \textbf{F} see \cite{RSW,propF}).
 More precisely, we conjecture firstly that a unitary $R$-matrix can only produce braid group representations with finite images (modulo the center) if $R$ is of finite order, and secondly, that braid group representations associated with a braided fusion category can be localized if and only if the Frobenius-Perron dimension of the category is integral.  Taken in tandem, the truth of these conjectures would imply another recent conjecture: integral Frobenius-Perron dimension is necessary and sufficient for a braided fusion category to induce only finite braid group images \cite{RSW,propF}.

 Property \textbf{F} TQFTs are also related to exact solvable statistical mechanics models, result in link invariants which are conjectured to have classical interpretations and to be efficiently computable by Turing machines \cite{FZ,jones89,KMM,LRW,R3}.  Property \textbf{F} TQFTs might also be easier to find in real systems and relevant to the non-abelian statistics of extended objects in dimension three \cite{FHNQWW}.  We also conjecture that topological quantum computing models from Property \textbf{F}  TQFTs can be simulated efficiently by Turing machines.

\section{Localization of braid group representations}

The ($n$-strand) braid group $\B_n$ is defined as the group generated by $\sigma_1,\ldots,\sigma_{n-1}$ satisfying:
\begin{equation}\label{farcommutivity}
\sigma_i \sigma_j=\sigma_j\sigma_i,\;\;\; \textrm{if} \;\;\; |i-j|
\geq 2,
\end{equation}
\begin{equation}\label{braidrelation}
\sigma_i \sigma_{i+1}\sigma_i=\sigma_{i+1} \sigma_{i}\sigma_{i+1}.
\end{equation}

Notice that we may identify $\B_{n}$ with the subgroup of $\B_{n+1}$ generated by $\sigma_i$, $1\leq i\leq n-1$.  When necessary we will denote this map by $\iota:\B_{n}\rightarrow \B_{n+1}$, the index $n$ being clear from the context.
We shall be interested in sequences of completely reducible, finite dimensional, unitary representations of the braid groups $\B_n$ that respect these inclusions $\cdots\subset\B_{n-1}\subset\B_n\subset\cdots$.
More precisely:
\begin{definition}\label{seq}
 An indexed family of complex $\B_n$-representations $(\rho_n,V_n)$ is a \emph{sequence of braid representations} if there exist injective algebra homomorphisms $\tau_n:\C\rho_n(\B_n)\rightarrow \C\rho_{n+1}(\B_{n+1})$ such that the following diagram commutes:

$$\xymatrix{ \C\B_n\ar[r]\ar@{^{(}->}[d]^\iota & \C\rho_n(\B_n)\ar@{^{(}->}[d]^{\tau_n} \\ \C\B_{n+1}\ar[r] & \C\rho_{n+1}(\B_{n+1})}.$$
\end{definition}
Generally $\tau_n$ will be some canonical map and will be supressed.  Note, however, that since $\iota(1)=1$ the identity in $\C\rho_{n+1}(\B_{n+1})$ is $\tau_n\circ\rho_n(1)$.  For example, this excludes the family of $\B_n$-representations obtained from the permutation action of $S_n$ on a basis of $\C^n$ from fitting our definition.  Similarly, the Burau (reduced or otherwise) representations are not a sequence of braid representations by our definition.

The first source of sequences of representations is the \emph{Yang-Baxter equation}:

\begin{definition}\label{rmatdef}
An invertible operator $R\in\End(W\ot W)$ is a solution to the (braided) Yang-Baxter equation if the following relation in $\End(W^{\ot 3})$ is satisfied:
\vspace{.1in}
\begin{enumerate}
 \item[(YBE)] $(R\ot I_W)(I_W\ot R)(R\ot I_W)=(I_W\ot R)(R\ot I_W)(I_W\ot R)$.
\end{enumerate}
\vspace{.1in}
Solutions to the Yang-Baxter equation are called \emph{$R$-matrices}, while the pair $(W,R)$ is called a \emph{braided vector space}.
\end{definition}

Such an $R$-matrix gives rise to a sequence of representations $\rho_R$ of the braid groups $\B_{n}$ on $W^{\ot n}$ via $\sigma_i\rightarrow R_i^W$ where
$$R_i^W:=I_W^{\ot i-1}\ot R\ot I_W^{\ot n-i-1}.$$  The most ubiquitous source of $R$-matrices is finite dimensional semisimple quasi-triangular (or braided) Hopf algebras $H$ (see eg. \cite{Kas}).  Braided vector spaces play a key role in the Andruskiewitch-Schneider program \cite{AS} for classifying pointed Hopf algebras.

We would like to point out that these representations $\rho_R$ are explicitly local, in the sense that the representation space is a tensor power and the braid group generators act on adjacent pairs of tensor factors.  Clearly if $R$ is unitary then the representation $\rho_R$ is unitary as well, but unitary $R$-matrices are surprisingly difficult to find (see Section \ref{ybo}).
\label{s:intro}

A second useful source of sequences of braid group representations is finite dimensional quotients of the braid group algebras $K\B_n$ where $K$ is some field.  A typical example are the Temperley-Lieb algebras $TL_n(q)$ for $q$ a variable \cite{jones87}.  These are the quotients of $\C(q)\B_n$ by ideals containing the element $(\sigma_i-q)(\sigma_i+1)\in\C(q)\B_n$.  The ensuing sequence of finite dimensional algebras $\cdots\subset TL_n(q)\subset TL_{n+1}(q)\subset\cdots$ are semisimple.  As semisimple finite dimensional algebras each $TL_n(q)\cong\bigoplus_{i\in I}\End(W_n^{(i)})$ has a standard faithful representation $W_n=\bigoplus_{i\in I} W_n^{(i)}$, which becomes a $\B_n$ representation by restriction.  Taking $q$ to be an $\ell$th root of unity spoils the semisimplicity, but this can be recovered by taking a further quotient $\overline{TL_n(q)}^\ell$ of $\cup_n TL_n(q)$ by the annihilator of a trace-form (see \cite{J86}).  For the particular choices $q=e^{\pm 2\pi i/\ell}$ Jones showed that the $\B_n$ representations are unitarizable.  Observe that one valuable feature of this construction is that the image of $K\B_n$ obviously generates the corresponding quotient algebra so that the representations $W_n^{(i)}$ are \emph{simple} as $\B_n$ representations.  Moreover, it is clear that the $\B_n$ generators $\sigma_i$ have finite order in the image of these representation.  Other examples of this type of construction are the Hecke-algebras and BMW-algebras (see \cite{wenzl88} and \cite{wenzlbcd}).

\emph{Braided fusion categories} (see eg. \cite{ENO}) are a modern marriage of these two sources of braid group representations.  These are fusion categories equipped with a family of natural transformations: for each pair of objects $X,Y$  one has an invertible morphism $c_{X,Y}\in\Hom(X\ot Y,Y\ot X)$ satisfying certain compatibility (hexagon) equations.  In particular the axioms imply that, for any fixed object $X$ and $1\leq i\leq n-1$, defining
$$R_X^i:=Id_X^{\ot i-1}\ot c_{X,X}\ot Id_X^{\ot n-i-1}\in\End(X^{\ot n})$$
induces a homomorphism $\C\B_n\rightarrow \End(X^{\ot n})$ via $\rho_X:\sigma_i\rightarrow R_X^i$.  More to the point, each $\End(X^{\ot n})$-module restricts to a $\B_n$ representation.  Since fusion categories have only finitely many simple objects, we can describe the simple $\End(X^{\ot n})$-modules completely: they are $W_i^n:=\Hom(X_i,X^{\ot n})$ where $X_i$ is a simple (sub)-object of $X^{\ot n}$.  Thus $\bigoplus_{i} W_i^n$ is a faithful $\End(X^{\ot n})$-module as well as for $\rho_X(\C\B_n)$ (although the $W_i^n$ might be reducible as $\B_n$-representations).    If the category in question is unitary (see \cite{TurWen97}) then the $\B_n$ representations are also unitary.  The representations obtained from quotients of Temperley-Lieb, Hecke and BMW algebras at roots of unity can be obtained in this setting by choosing an appropriate object in a braided fusion category.  With some additional assumptions (modularity), unitary braided fusion categories can be used to construct unitary TQFTs which in turn can be used as models for universal TQC \cite{FKW,FLWu}.

It is important to point out that the objects in a braided fusion categories are not required to be vector spaces (and usually are not) so that the representations $\rho_X$ are \emph{not} explicitly local as those of the form $\rho_R$ are.
For application to TQC, ideally we would like to have unitarity, explicit locality, and universality all in a single TQFT.  We conjecture this is impossible, even if we replace universality with the weaker condition that the braid group images are infinite.  On the other hand, representations of braid groups from $\B_n$ quotients and braided fusion categories have some \emph{hidden} locality and can often be unitarized.

Inspired by such considerations, we make the following:

\begin{definition}\label{localdef}
Suppose $(\rho_n,V_n)$ is a sequence of unitary braid representations.  A \emph{localization} of $(\rho_n,V_n)$ is a braided vector space $(W,R)$ such that for all $n\geq 2$:
\begin{enumerate}
 \item[(i)] there exist algebra homomorphisms $\phi_n:\C\rho(\B_n)\rightarrow \End(W^{\ot n})$ such that $\phi_n\circ\rho(b)=\rho_R(b)$ for $b\in\B_n$ and 
\item[(ii)] the induced $\C\rho(\B_n)$-module structure on $W^{\ot n}$ is faithful.
\end{enumerate}
In other words, if $V_n\cong\bigoplus_{i\in J_n}V_n^{(i)}$ as a $\C\B_n$-module with $V_n^{(i)}$ simple then  $W^{\ot n}\cong \bigoplus_{i\in J_n}\mu_n^iV_n^{(i)}$ as a $\C\B_n$-module for some multiplicities $\mu_n^i>0$.

\iffalse
Suppose $(\rho_n,V_n)$ is a family of unitary representations of the braid groups $\B_n$, with $V_n\cong\bigoplus_{i\in I_n}V_n^{(i)}$ as a $\C\B_n$-module with $V_n^{(i)}$ simple.  A \emph{localization} of $(\rho_n,V_n)$ is a pair $(W,R)$ where $R\in\End(W\ot W)$ is an invertible unitary solution to the YBE such that $W^{\ot n}\cong \bigoplus_{i\in I_n}\mu_n^iV_n^{(i)}$ as $\C\B_n$-modules for some integers $\mu_n^i>0$ (here $\sigma_i\in\B_n$ acts on $W^{\ot n}$ via $R_i^W$).
\fi

\end{definition}
To further motivate this definition we present the following:

\begin{ex}\label{doubleex}
 Let $G$ be a finite group and consider the quasi-triangular Hopf algebra $H:=DG$, i.e. the Drinfeld double of the group algebra $\C G$.  The category $H$-mod of finite dimensional $H$-modules has the structure of a (modular) braided fusion category (see \cite{BK}).  Consider the sequence of $\B_n$-representations $(\rho_H,\End_H(H^{\ot n}))$ obtained as above for the object $H$ in $H$-mod (here the subscript $H$ indicates $H$-module endomorphisms, that is, endomorphisms in $H$-mod).  We may also construct a sequence of $\B_n$-representations as follows: let $\check{R}\in \End(H\otimes H)$ be the solution to the YBE for the vector space $H$ so that $(H,\check{R})$ is a braided vector space.  With respect to the usual (tensor product) basis for $H\otimes H$ (see \cite{ERW}) $\check{R}$ is a permutation matrix and hence unitary.  Moreover, standard algebraic considerations show that $(H,\check{R})$ is a localization of $(\rho_H,\End_H(H^{\ot n}))$.  We hasten to point out that the main result of \cite{ERW} shows that the braid group images of these representations are
finite groups.
\end{ex}

Notice that we require a single $(W,R)$ uniformly localizing $\rho_n$ for each $n$.  While this definition suits our purposes, one can imagine other more general definitions.  We clarify this definition with a few remarks:
\begin{remark} \label{defrems}
 \begin{itemize}\item The condition $\mu_n^i>0$ forces $W^{\ot n}$ be a faithful $\rho_n(\C\B_n)$-module (notice that the only simple submodules of $W^{\ot n}$ are the simple submodules of $V_n$).  It is possible that some $V_n^{(i)}$ and $V_n^{(j)}$ are isomorphic as $\B_n$-representations for $i\ne j$.  We are interested in the sequence of \emph{algebras} $\C\rho_n(\B_n)$ for $n>1$ in this paper--in particular the combinatorial structures, such as how the simple $\C\B_n$-modules decompose when restricted to $\C\B_{n-1}$.  From other points of view it might be reasonable to  decompose $V_n$ into $\B_n$-representations and allow multiplicities.  We could then consider a (weaker) localization in this setting, but it would in general not be faithful as a $\C\B_n$-module.
 \item This definition still makes sense with the word ``unitary'' left out if we assume that all of the representations are completely reducible.  Such non-unitary localizations would include sequences of braid group representations obtained from any quasi-triangular Hopf algebra as in Example \ref{doubleex}.
\item From the point of view of quasi-triangular \emph{quasi}-Hopf algebras our definition is somewhat restrictive.  For example, can we localize $\B_n$-representations obtained from \emph{twisted} finite group doubles $D^\om G$?  Although it does not preclude the possibility, the most obvious approach fails.  The reason is that for a quasi-triangular, quasi-Hopf algebra $H$, the braid group generator $\sigma_i$ acts on the $H$-module $V^{\ot n}$ by (see \cite[Lemma XV.4.1]{Kas}):

$$A_i^{-1}(I_V^{\ot i-1}\ot c_{V,V}\ot I_V^{n-i-1})A_i$$
where the $A_i$ are non-trivial operators determined by the associator for $H$, and $c_{V,V}$ is the restriction of $\check{R}$ to $V\ot V$.  One might consider a more general \emph{quasi}-localization that incorporates this approach, but this would take us too far afield so we ignore this possibility for the present.
\item A generalized version of the YBE was introduced in \cite{RZWG}.  The idea is that we can consider $R\in\End(W^{\ot k})$ for $k>2$ and some vector space $W$ satisfying:
$$(R\ot Id_m)(Id_m\ot R)(R\ot Id_m)=(Id_m\ot R)(R\ot Id_m)(Id_m\ot R)$$
where $Id_m:=I_W^{\ot m}$ with $m\geq 1$.  Such generalized Yang-Baxter operators do not automatically yield $\B_n$-representations as relation (\ref{farcommutivity}) is not longer automatic.  However, interesting examples exist where (\ref{farcommutivity}) is satisfied so that one still obtains $\B_n$-representations.  For example we are aware of such examples with $\dim(W)=2$, $k=3$ and $m=1$.  Such examples and the corresponding generalized notion of localization will be considered in future papers.
\item There is a weaker notion of localization found in \cite[proof of Theorem 2.2]{FKW} for topological modular functors.  The idea is to embed the representation space (denoted $V(\Sigma)$ in \cite{FKW}) into a space of the form $W:=(\C^p)^{(n-1)}$ via the gluing axiom in two different ways (by an ``even" and ``odd" pair of pants decompositions).  The $F$-matrices give the change of basis between these two embeddings and the action of a braid $\beta$ is simulated on $W$ using these embeddings, the $F$-matrices and the braid group action on the space $\C^p$.  This is the ``hidden locality" to which we referred in the Introduction. Our strict sense of localization gives a much more direct simulation via a single $R$-matrix.
\end{itemize}
\end{remark}

Associated with any object $X$ in a modular category is a link invariant $Inv_X(L)$.  Given an $R$-matrix one may also define link invariants $T_R(L,\mu_i,\alpha,\beta)$ for any \emph{enhancement} $(\mu_i,\alpha,\beta)$ of $R$ (see \cite{turaevR}).  If $(W,R)$ is a localization of $(\rho_X,\End(X^{\ot n}))$ then one expects the invariant $Inv_X(L)$ to be directly related to $T_R(L,\mu_i,\alpha,\beta)$.  For example, the localization of the Jones representations at level 2 described in \cite{FRW} has an enhancement and the relationship between the two invariants is explicitly described.

\section{Unitary Yang-Baxter operators}\label{ybo}
In this section we discuss the braid group images associated with unitary solutions to the YBE.  In case $\dim(W)=2$, all unitary solutions $R$ have been classified in \cite{Dye} (following the complete classification in \cite{Hiet}), up to conjugation by an operator of the form $Q\ot Q$, $Q\in\End(W)$.  All but one of the conjugacy classes are have a \emph{monomial} representative: a matrix that is the product of a diagonal matrix $D$ and a permutation matrix $P$.  The corresponding braid group representations were analyzed in \cite{frankothesis}, and found to have finite image under the assumption that $R=DP$ has finite order.  The only unitary solution not of monomial form is:
$$\frac{1}{\sqrt{2}}\begin{pmatrix}1 & 0 & 0 & 1\\0 & 1& -1 & 0\\0 & 1&1& 0\\-1 & 0 & 0
&1\end{pmatrix}.$$
The corresponding braid group images were studied in \cite{FRW} and found to be finite as well.

As mentioned above, unitary solutions can be obtained from the quasi-triangular Hopf algebras $DG$, and the corresponding braid group images are known to be finite \cite{ERW}.

This (empirical) evidence as well as other considerations to be described later lead us to make the following:
\begin{conj}\label{uybconj}
\begin{enumerate}
\item[(a)]
Suppose $(V,R)$ is a unitary solution to the YBE such that $R$ has finite (projective) order, with corresponding $\B_n$-representations $(\rho_R,V^{\ot n})$.  Then $\rho_R(\B_n)$ is a finite group (projectively).
\item[(b)] Conversely, a sequence of unitary representations $(\rho_n,V_n)$ such that $\rho_n(\sigma_i)$ has finite (projective) order is localizable only if $\rho_n(\B_n)$ is (projectively) finite for all $n$.
\end{enumerate}
\end{conj}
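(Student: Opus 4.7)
The plan is to first reduce (b) to (a) and then to attempt (a) directly by analyzing the finite-dimensional algebra generated by the $R_i$. For (b), suppose $(W,R)$ is a localization of $(\rho_n,V_n)$. Condition (ii) of Definition~\ref{localdef} says that $\phi_n\colon \C\rho_n(\B_n) \hookrightarrow \End(W^{\ot n})$ is injective, and condition (i) gives $\rho_R(b) = \phi_n(\rho_n(b))$ for every $b \in \B_n$. Restricting $\phi_n$ to the subgroup $\rho_n(\B_n)$ therefore yields a group isomorphism onto $\rho_R(\B_n)$. Moreover, finite projective order of $\rho_n(\sigma_i)$ forces $\rho_R(\sigma_1) = R \ot I^{\ot(n-2)}$ to have finite projective order, from which one immediately reads off $R^k = \lambda I$ for some integer $k$ and scalar $\lambda$. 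Thus, granting (a) for $(W,R)$, $\rho_R(\B_n)$ is projectively finite, and the isomorphism above transports this to $\rho_n(\B_n)$, which is the content of (b).

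For (a), I would study the $*$-algebra $A_n := \C\rho_R(\B_n) \subseteq \End(V^{\ot n})$, which is semisimple by unitarity of $R$. The finite projective order of $R$ gives a polynomial identity $\prod_j(R - \xi_j) = 0$ whose roots are (up to a common scalar) roots of unity, and the same polynomial annihilates each $R_i$. Combined with the braid relations, this presents $A_n$ as a unitary quotient of a presented algebra of Hecke, BMW, or cyclotomic type depending on the degree of that polynomial. The goal is then to match $A_n$ with a known tower whose sequence of braid group images is already proven to be projectively finite. When the degree is $2$ one lands in the Hecke algebra at a root of unity, where the results around \cite{J86,jones87} together with the Property \textbf{F} analysis of \cite{FLW,FLWu} cover many cases; for degree $3$ the natural targets are cubic Hecke or BMW towers (cf.\ \cite{wenzl88,wenzlbcd}), where Property \textbf{F} is known or tractable in the relevant root-of-unity regimes.

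The hardest part, and the essential obstacle, is that no classification of finite-order unitary solutions of the Yang--Baxter equation is known in dimensions $\geq 3$: the results of \cite{Dye,Hiet} handle $\dim V = 2$ completely, but already in dimension three the landscape of candidates is poorly understood, and it is conceivable that (YBE) forces algebraic identities on the $R_i$ beyond those captured by the standard towers. A complementary, more categorical strategy is to reconstruct a unitary braided fusion category from $(V,R)$, identify a simple object whose self-braiding recovers $R$, and then invoke the conjectured equivalence between integral Frobenius--Perron dimension and Property \textbf{F} stated in the introduction. As a concrete first milestone I would treat $\dim V = 3$: enumerate the possible minimal polynomials of $R$, constrain $R$ up to $Q \ot Q$ conjugation using (YBE), and verify projective finiteness case by case, paralleling the $9 \times 9$ localization of the unitary Jones representation at $q = e^{\pi i/6}$ advertised in the abstract.
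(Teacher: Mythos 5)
What you have labeled a proof proposal addresses Conjecture~\ref{uybconj}, for which the paper offers \emph{no proof}: it is stated as an open conjecture, supported only by low-dimensional evidence (the $\dim W = 2$ classification via \cite{Dye,Hiet,frankothesis,FRW}), by the Hopf-algebraic examples of \cite{ERW}, and by its relation to \cite[Conjecture 6.6]{RSW}. There is therefore no ``paper's proof'' to compare against. Your reduction of part (b) to part (a) is a correct and worthwhile observation in itself: Definition~\ref{localdef}(ii) makes $\phi_n$ injective on $\C\rho_n(\B_n)$, hence its restriction to the group $\rho_n(\B_n)$ is an isomorphism onto $\rho_R(\B_n)$, and finite projective order of $\rho_n(\sigma_1)$ is transported to $R$. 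The one point you should make explicit is that $\phi_n$ is only required to be an algebra homomorphism, not a $*$-homomorphism, so unitarity of $R$ does not follow from unitarity of the $\rho_n$; it must be built into the notion of localization (as the paper clearly intends in the unitary setting) before you may invoke (a).

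Your treatment of (a), however, is not a proof and does not pretend to be: as you yourself flag, there is no classification of finite-order unitary YBE solutions in dimension $\geq 3$, and identifying $\C\rho_R(\B_n)$ with a Hecke, cubic-Hecke, $BMW$, or cyclotomic tower for which property~\textbf{F} is already established is exactly the open part of the problem. The paper records two cautionary facts your sketch should engage with. First, the standard $U_q\ssl_2$ $R$-matrix at a root of unity is a non-unitary, finite-order solution generating an infinite braid group image, so knowing the degree of the minimal polynomial of $R$ (your ``degree-$2$ lands in Hecke'' step) is not enough on its own; unitarity has to be used structurally, not just to guarantee semisimplicity. Second, the paper exhibits an explicit $9\times 9$ unitary $R$-matrix of \emph{infinite} projective order, showing that the finite-order hypothesis is a genuine restriction and not automatic for unitary $R$. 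Any serious attack on (a) must use both hypotheses in tandem; at present your proposal is a plausible research program, not an argument, and you should present it as such.
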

 If the words \emph{unitary} or \emph{finite order} are omitted Conjecture \ref{uybconj} is false.  For example the standard solution to the YBE corresponding to $U_q\ssl_2$:
$$M:=\begin{pmatrix} q&0&0&0\\0&0&1&0\\0&q&q-1&0\\0&0&0&q\end{pmatrix}$$ has finite order
for $q$ a root of unity but is not unitary if $q\not=1$ and generates an infinite group for $q=e^{\pi i/\ell}$ with $\ell>6$.

  Moreover, one may find unitary solutions to the YBE that do not have finite (projective) order.  The following is an example:
$$\begin{pmatrix}
-1&0&0&0&0&0&0&0&0\\0&0&0&1&0&0&0&0&0\\0&0&0&0&0&0&1
&0&0\\0&2\sqrt{6}/5&1/5&0&0&0&0&0&0
\\0&0&0&0&1&0&0&0&0\\0&0&0&0&0&0&0
&1&0\\0&-1/5&2\sqrt{6}/5&0&0&0&0&0&0
\\0&0&0&0&0&1&0&0&0
\\0&0&0&0&0&0&0&0&1\end{pmatrix}.$$

This last matrix corresponds to a braided vector space of \emph{group type} in the terminology of \cite{AS}.

\section{Non-localizable representations}
In the braided fusion category setting Conjecture \ref{uybconj} is closely related to another fairly recent conjecture (see \cite[Conjecture 6.6]{RSW}).  Braided fusion categories are naturally divided into two classes according to the algebraic complexity of their fusion rules.  In detail, one defines the \emph{Frobenius-Perron dimension} $\FPdim(X)$ of an object $X$ in a fusion category $\CC$ to be the largest eigenvalue of the matrix representing the class of $X$ in the left-regular representation of the fusion ring (or Grothendieck semiring).  Then one defines $\FPdim(\CC)=\sum_i (\FPdim(X_i))^2$ where the sum is over all (isomorphism classes of) simple objects in $\CC$.  If $\FPdim(X_i)\in\N$ for all simple $X_i$ then one says $\CC$ is \emph{integral} while if $\FPdim(\CC)\in\N$ then $\CC$ is said to be \emph{weakly integral}.  It is easy to see that weak integrality is equivalent to $\FPdim(X_i)^2\in\N$ for all simple $X_i$ (see \cite{ENO}).  A braided fusion category $\CC$ is said to have \emph{property \textbf{F}} if for each $X\in \CC$ the $\B_n$-representations on $\End(X^{\ot n})$ have finite image for all $n$.  Then Conjecture 6.6 of \cite{RSW} states: \emph{a braided fusion category $\CC$ has property $\textbf{F}$ if, and only if, $\FPdim(X)^2\in\N$ for each simple object $X$}.  Notice that this conjecture has no formulation for general braided tensor categories, for which one has no sensible (real valued) dimension function.  Some recent progress towards this conjecture can be found in \cite{propF,RUMA} and further evidence can be found in \cite{LRW,LR1}.  Combining with Conjecture \ref{uybconj} we make the following:

\begin{conj}\label{localintegral}
Let $X$ be a simple object in a braided fusion category $\CC$. The representations $(\rho_X,\End(X^{\ot n}))$ are localizable if, and only if, $\FPdim(X)^2\in\N$.
\end{conj}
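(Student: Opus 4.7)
The plan is to deduce Conjecture \ref{localintegral} by combining the property \textbf{F} conjecture (\cite[Conjecture 6.6]{RSW}) with Conjecture \ref{uybconj}; taken together these two already encode the desired biconditional, and the work consists of matching their hypotheses to the localization setup of Definition \ref{localdef}.

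For the ``only if'' direction, I would argue as follows. Suppose $(W,R)$ localizes $(\rho_X,\End(X^{\ot n}))$. In a unitary ribbon category $c_{X,X}$ decomposes on the isotypic components of $X\ot X$ as scalars built from ribbon twists, which are roots of unity, so each $\rho_X(\sigma_i)$ has finite order. Faithfulness condition (ii) of Definition \ref{localdef} makes each $\phi_n$ an injective algebra map, so $R=\phi_2(\rho_X(\sigma_1))$ inherits the finite (projective) order of $\rho_X(\sigma_1)$. Conjecture \ref{uybconj}(a) then forces $\rho_R(\B_n)$ to be projectively finite, and since $\phi_n$ restricts to an injective group homomorphism $\rho_X(\B_n)\hookrightarrow \rho_R(\B_n)$, the object $X$ has projectively finite braid group image for all $n$. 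Applied to the braided fusion subcategory tensor-generated by $X$, the ``property \textbf{F} $\Rightarrow$ weak integrality'' direction of the RSW conjecture then yields $\FPdim(X)^2\in\N$.

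For the ``if'' direction, assume $\FPdim(X)^2\in\N$. The prototype is Example \ref{doubleex}: when $\CC\simeq\Rep(DG)$ the underlying vector space $H=\C[G]\ot\C[G]^*$ equipped with its canonical monomial $R$-matrix $\check R$ localizes $(\rho_X,\End(X^{\ot n}))$ for every simple $X$, because $H$ contains every simple $H$-module with positive multiplicity in its regular decomposition, hence $H^{\ot n}$ contains every simple $\B_n$-summand of $\End(X^{\ot n})$ with positive multiplicity. The general strategy would be to realize an arbitrary weakly integral unitary braided fusion category as $\Rep(H)$ for some finite-dimensional semisimple quasi-triangular Hopf algebra $H$ and run the same construction with $W=H$.

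The main obstacle lies in this last step. It is not known that every weakly integral unitary braided fusion category is equivalent to $\Rep(H)$ for a genuine quasi-triangular Hopf algebra, as opposed to only a quasi-triangular \emph{quasi}-Hopf algebra. As flagged in Remark \ref{defrems}, in the quasi-Hopf case the associator introduces operators $A_i$ that destroy the tensor-factorized form of the braid action, so the naive $\check R$ construction fails to produce a localization in the strict sense of Definition \ref{localdef}. Overcoming this barrier will likely require either a case-by-case production of unitary $R$-matrices for each family of weakly integral categories (in the spirit of the $9\times 9$ example for $(SO(3),2)$ announced in this paper and the Jones-at-level-$2$ construction of \cite{FRW}), or a general Hopf-algebraic realization theorem for weakly integral unitary braided fusion categories --- both substantial open problems in their own right.
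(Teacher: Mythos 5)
You have correctly recognized that the statement is a \emph{conjecture}, not a theorem, and that no complete proof exists in the paper. Your observation that the ``if'' direction (producing a unitary $R$-matrix from weak integrality) is the genuinely open part, and your identification of the quasi-Hopf associator as the obstruction to running the Drinfeld-double prototype in general, are accurate and consistent with the paper's own Remark \ref{defrems}. Your framing of the conjecture as the composite of Conjecture \ref{uybconj} and the RSW property \textbf{F} conjecture also matches the motivational text immediately preceding it.

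However, for the ``only if'' direction the paper does substantially better than your conditional reduction, and this is the direction you should compare against. Theorem \ref{thm:main} proves \emph{unconditionally} (invoking neither Conjecture \ref{uybconj} nor RSW) that localizability forces $\FPdim(X)^2\in\N$, under the single extra hypothesis that $\C\rho^X_n(\B_n)=\End(X^{\ot n})$ for all $n$. The mechanism is completely different from your route through finiteness of braid images: via Proposition \ref{combprop}, the existence of a localization forces the multiplicity vectors $\ma_n$ to be Perron--Frobenius eigenvectors of the primitive blocks $G^{(i)}$ of $N_X^p$, which makes $\lambda^p=\FPdim(X)^p$ a rational algebraic integer and hence an integer; since $\FPdim(X)$ lies in a cyclotomic (abelian) extension of $\Q$ by \cite[Corollary 8.54]{ENO}, the minimal polynomial $x^s-c$ of $\lambda$ must have $s\le 2$. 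Your argument costs two open conjectures where the paper's costs only the generation hypothesis (which is known for the quantum-group cases of interest). There is also a small gap in your chain of reductions: finiteness of the braid group image for the single generating object $X$ is not literally property \textbf{F} for the subcategory $\CC[X]$, which requires finite image for \emph{every} object; an additional step (e.g.\ that $X$ being a tensor-generator propagates finiteness) is needed before you can cite the ``property \textbf{F} $\Rightarrow$ weak integrality'' direction of RSW.
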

Note that this conjecture is both less and more general than Conjecture \ref{uybconj}: not all sequences $(\rho_n,V_n)$ of $\B_n$-representations can be realized via fusion categories, but we do not require our braided fusion category $\CC$ to be unitary (just as in \cite[Conjecture 6.6]{RSW}).

The main result of this section is to show that one direction of Conjecture \ref{localintegral} under the assumption that the braid group image generates the centralizer algebras $\End(X^{\ot n})$.
To describe the results we will need some graph-theoretic notions associated with sequences of semisimple finite-dimensional $\C$-algebras ordered under inclusion.  See \cite{GHJ} and graph-theoretical references therein for relevant (standard) definitions.

The main tool we employ is the Perron-Frobenius Theorem.  Our approach is fairly standard, see \cite{wenzl88} for arguments of a similar flavor.  Recall that a non-negative $n\times n$ matrix $A$ is \emph{irreducible} if for each $(i,j)$ there exists a $k=k(i,j)$ so that $(A^k)_{i,j}>0$ and $A$ is \emph{primitive} if there exists a universal $m$ so that $A^m$ has positive entries.  We associate to each irreducible matrix $A$ a digraph $\Gamma(A)$ with $n$ vertices labeled by $1,\ldots, n$ and an arc (directed edge) from $i$ to $j$ if $A_{i,j}>0$.  The digraph $\Gamma(A)$ is clearly \emph{strongly connected}, i.e. there is a (directed) path between any two vertices.  Let $k_i$ be the minimal length of a directed path from $i$ to itself in the graph $\Gamma(A)$ and set $p=\gcd(k_1,\cdots,k_n)$.  The integer $p$ is called the \emph{period} of $A$, and we may partition the vertices into $p$ disjoint sets as follows: define $\mathcal{O}_i$ to be the set of vertices $j$ such that there is a path of length $i+tp$ from $1$ to $j$ for some $t$.  Then $\{\mathcal{O}_i\}_{i=0}^{p-1}$ partitions the vertex set.  By reordering the indices $1,\ldots,n$ as $\mathcal{O}_0,\mathcal{O}_1,\ldots,\mathcal{O}_{p-1}$ the adjacency matrix of $\Gamma(A)$ takes the (block permutation) form:

$$A_\Gamma:=\begin{pmatrix} 0 & C_0& \cdots & 0 \\ 0& 0& \ddots & 0\\
\vdots & \vdots& \ddots & C_{p-2} \\ C_{p-1} & 0   &\cdots &0 \end{pmatrix}.$$
notice that the $C_i$ are not necessarily square matrices, but $C_iC_{i+1}$ is well-defined.
\begin{theorem}[\cite{Gant}]\label{pfthm}
Suppose that $A$ is (non-negative and) irreducible of period $p$.  Then:
\begin{enumerate}
\item[(a)]  $A^p$ is block diagonal with primitive blocks $B_0,\cdots,B_{p-1}$.
\item[(b)] $A$ has a real eigenvalue $\lambda$ such that
\begin{enumerate}
\item[(i)] $|\alpha|\leq \lambda$ for all eigenvalues $\alpha$ of $A$ and
\item[(ii)] There is a strictly positive (right) eigenvector $\mathbf{v}$ associated to $\lambda$
\end{enumerate}
\item[(c)] $\lambda^p$ is a simple eigenvalue of each $B_i$ such that $\lambda^p>|\beta|$ for all eigenvalues $\beta$ of $B_i$ and has strictly positive eigenvector $\mathbf{v_i}$.
\item[(d)] Suppose $A$ is primitive (i.e. $p=1$) and $\mathbf{w}$ is a positive left eigenvector for $\lambda$ chosen so that $\mathbf{w}\mathbf{v}=1$ then:
$$\lim_{s\rightarrow\infty}(1/\lambda A)^s=\mathbf{v}\mathbf{w}$$
\end{enumerate}
\end{theorem}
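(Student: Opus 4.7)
The plan is to follow the classical Perron--Frobenius strategy, reducing to the primitive case via the block permutation form $A_\Gamma$ that was set up just before the theorem statement. The work splits naturally: existence of the dominant eigenvalue and positive eigenvector for the irreducible $A$, the cyclic block analysis giving (a), and the fine structure on the diagonal blocks giving (c) and (d).

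For (b), I would introduce the Collatz--Wielandt functional $r(\mathbf{x}) := \min_{i:\, x_i>0}(A\mathbf{x})_i/x_i$ on the nonnegative cone, set $\lambda := \sup_{\mathbf{x}\geq \zero,\, \mathbf{x}\ne \zero} r(\mathbf{x})$, and show the supremum is attained on the compact simplex at some vector $\mathbf{v}$. One then argues that the extremal $\mathbf{v}$ must satisfy $A\mathbf{v}=\lambda \mathbf{v}$ exactly: if strict inequality held in some coordinate, replacing $\mathbf{v}$ by $(I+A)^{n-1}\mathbf{v}$, which has strictly positive entries by irreducibility, would give a strictly larger value of $r$. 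The same $(I+A)^{n-1}$ trick then forces $\mathbf{v}>\zero$ strictly, giving (b)(ii). For (b)(i), if $A\mathbf{y}=\alpha \mathbf{y}$ with $|\alpha|>\lambda$, then the coordinatewise inequalities $A|\mathbf{y}|\geq |\alpha||\mathbf{y}|$ would give $r(|\mathbf{y}|)\geq |\alpha|>\lambda$, a contradiction.

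For (a), the block permutation shape of $A_\Gamma$ makes $A_\Gamma^{\,p}$ block diagonal by direct multiplication, with diagonal blocks $B_i = C_i C_{i+1}\cdots C_{i+p-1}$ (indices cyclic mod $p$). Each $B_i$ is irreducible because the digraph of $A$ restricted to $\mathcal{O}_i$ inherits strong connectivity from that of $\Gamma(A)$, and primitive because by definition of $p$ the lengths of loops at any vertex generate $p\mathbb{Z}$, so every sufficiently large multiple of $p$ occurs as a loop length, forcing some $B_i^{\,m}$ to have all positive entries. Part (c) then follows by applying the primitive case of Perron--Frobenius to each $B_i$: the eigenvalues of $B_i$ are $\{\mu^p : \mu \text{ eigenvalue of } A \text{ in the $i$-th cyclic class}\}$, so $\lambda^p$ is their common Perron value, is simple, and has a strictly positive eigenvector extracted from $\mathbf{v}$.

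For (d), assuming $A$ is primitive, I would show $|\beta|<\lambda$ strictly for every other eigenvalue $\beta$. The key step is: if $A\mathbf{y}=\beta \mathbf{y}$ with $|\beta|=\lambda$, then equality in $A|\mathbf{y}|\geq |\beta||\mathbf{y}|$ combined with strict positivity of $A^m$ forces $\mathbf{y}$ to be a unimodular rotation of $\mathbf{v}$, giving $\beta=\lambda$. The convergence $(A/\lambda)^s\to \mathbf{v}\mathbf{w}$ then follows from Jordan decomposition: the rank-one projection onto the Perron eigenline along the complementary invariant subspace equals $\mathbf{v}\mathbf{w}$ under the normalization $\mathbf{w}\mathbf{v}=1$, and all other Jordan blocks contribute geometrically decaying terms $O((|\beta|/\lambda)^s s^k)\to 0$. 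I expect the hardest step to be the strict spectral gap $|\beta|<\lambda$ in the primitive case, since this is precisely where irreducibility (via strict positivity of $A^m$) must be used in an essential, non-formal way; everything else is bookkeeping against the cyclic structure already set up in the paragraph preceding the theorem.
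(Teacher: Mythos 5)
This theorem is cited to Gantmacher \cite{Gant} and not proved in the paper, so there is no in-text argument to compare against; your proposal should be judged on its own merits. What you sketch is the standard Collatz--Wielandt route, and the bulk of it is sound: the variational characterization of $\lambda$, the $(I+A)^{n-1}$ device to promote ``$A\mathbf{v}\geq\lambda\mathbf{v}$, not equal'' to a strict improvement of the functional and to force strict positivity of $\mathbf{v}$, the triangle-inequality bound $A|\mathbf{y}|\geq|\alpha||\mathbf{y}|$ for (b)(i), the reduction of (a) to the observation that $A_\Gamma^p$ is block diagonal with $B_i=C_iC_{i+1}\cdots C_{i+p-1}$ and that these blocks are aperiodic (hence primitive) on the cyclic classes, and the equality-case analysis with $A^m>0$ for the spectral gap in (d). One small point of care you pass over: the functional $r$ is only upper semicontinuous on the boundary of the simplex, so attainment of the supremum is not immediate; the usual fix is to pass to the compact set of normalized vectors $(I+A)^{n-1}\mathbf{x}$, where $r$ is continuous.

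The genuine gap is simplicity of $\lambda$. In (c) you write that $\lambda^p$ ``is simple'' by ``the primitive case of Perron--Frobenius,'' and in (d) you invoke the Jordan decomposition to get convergence of $(A/\lambda)^s$ to a rank-one projection. Both steps require that $\lambda$ be an \emph{algebraically} simple eigenvalue, and nothing in your sketch establishes this. Your argument in (d) shows $|\beta|<\lambda$ for eigenvalues $\beta\neq\lambda$ of a primitive matrix, but this neither rules out a second linearly independent $\lambda$-eigenvector nor a nontrivial Jordan block at $\lambda$; if either occurred, $(A/\lambda)^s$ would fail to converge to $\mathbf{v}\mathbf{w}$. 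Two further lemmas are needed and are not present. First, geometric simplicity: if $A\mathbf{u}=\lambda\mathbf{u}$ with $\mathbf{u}$ not proportional to $\mathbf{v}$, choose a scalar $c$ so that $\mathbf{v}-c\mathbf{u}\geq\zero$ with at least one zero coordinate; then $A^m(\mathbf{v}-c\mathbf{u})=\lambda^m(\mathbf{v}-c\mathbf{u})$ has a zero entry but strictly positive left-hand side unless $\mathbf{v}-c\mathbf{u}=\zero$, a contradiction. Second, to preclude a size-$\geq 2$ Jordan block: if $(A-\lambda I)\mathbf{u}=\mathbf{v}$, pair with the strictly positive left eigenvector $\mathbf{w}$ to get $0=\mathbf{w}(A-\lambda I)\mathbf{u}=\mathbf{w}\mathbf{v}>0$, absurd. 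With these two lemmas added the proposal is complete.
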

The real positive eigenvalue $\lambda$ (resp. eigenvector $\mathbf{v}$) in the theorem is called the \emph{Perron-Frobenius eigenvalue} (resp. \emph{eigenvector}).

To any sequence of multi-matrix algebras $\mathcal{S}:=\C=A_1\subset\cdots\subset A_j\subset A_{j+1}\subset\cdots$ with the same identity one associates the \emph{Bratteli diagram} which encodes the combinatorial structure of the inclusions.  The Bratteli diagram for a pair $M\subset N$ of multi-matrix algebras is a bipartite digraph $\Gamma$ encoding the decomposition of the simple $N$-modules into simple $M$-modules, and the inclusion matrix $G$ is the adjacency matrix of $\Gamma$.  More precisely, if $N\cong\bigoplus_{j=1}^t \End(V_j)$ and $M\cong\bigoplus_{i=1}^s \End(W_i)$ the inclusion matrix $G$ is an $s\times t$ integer matrix with entries:
$$G_{i,j}=\dim\Hom_M(\Res_M^NV_j,W_i)$$ i.e. the multiplicity of $W_i$ in the restriction of $V_j$ to $M$.
For an example, denote by $M_n(\C)$ the $n\times n$ matrices over $\C$ and let $N=M_4(\C)\oplus M_2(\C)$ and $M\cong\C\oplus\C\oplus M_2(\C)$ embedded in $N$ as matrices of the form:
$$\begin{pmatrix} a & 0 & 0\\ 0 & a&0\\ 0 & 0 & A\end{pmatrix}\oplus \begin{pmatrix} a & 0\\ 0 & b\end{pmatrix}$$
where $a,b\in\C$ and $A\in M_2(\C)$.  Let $V_1$ and $V_2$ be the simple $4$- and $2$-dimensional $N$-modules respectively, and $W_1$, $W_2$ and $W_3$ be the simple $M$-modules of dimension $1$, $1$ and $2$.  Then the Bratteli diagram and corresponding inclusion matrix for $M\subset N$ are:

$$
\xymatrix{ W_1\ar@{=>}[d]\ar[dr] & W_2\ar[d] & W_3\ar[dll]\\ V_1 & V_2}$$
and
$$\begin{pmatrix} 2& 1\\ 0 & 1\\ 1 &0\end{pmatrix}.$$

The Bratteli diagram for the sequence $\mathcal{S}$ is the concatenation of the Bratteli diagrams for each pair $(A_k,A_{k+1})$, with corresponding inclusion matrix $G_k$.  We organize this graph into levels (or stories) corresponding to each algebra $A_k$ so that the Bratteli diagram $(A_{k-1},A_k)$ is placed above the vertices labelled by simple $A_k$-modules, and that of $(A_k,A_{k+1})$ is placed below.  (\emph{N.b}: this notion of level is distinct from that of the Introduction mentioned in connection with fusion categories associated with Lie groups.)  Having fixed an order on the simple $A_k$-modules we record the corresponding dimensions in a vector $\mathbf{d}_k$.  Observe that $\mathbf{d}_{k+1}=G_k^T\mathbf{d}_k$.

\begin{definition}
 A sequence of multi-matrix algebras $\mathcal{S}:=\C=A_0\subset\cdots\subset A_n\subset$ can be \emph{combinatorially localized at depth $k$} if there exists an integer $m$ and a sequence of positive integer-valued vectors $\mathbf{a}_n$ so that
\begin{enumerate}
 \item[(a)] $m^n=\lan \ma_n,\md_n\ra$ and
\item[(b)] $m\ma_n=G_n\ma_{n+1}$
\end{enumerate}
for all $n\geq k$.
Here $\md_n$ is the dimension vector for $A_n$ and $G_n$ is the inclusion matrix for $A_n\subset A_{n+1}$.
\end{definition}
Condition (a) is the combinatorial consequence of localization at level $n$ and condition (b) corresponds to compatibility under restriction.  We will call the vectors $\ma_n$ \emph{localization vectors}.  Clearly if a sequence of completely reducible braid group representations $(\rho_n,V_n)$ is localizable then the sequence of algebras $\C\rho_n(\B_n)$ is combinatorially localizable.  Indeed, if $(W,R)$ is a localization of $(\rho_n,V_n)$ as in Definition \ref{localdef} then we may take $m=\dim(W)$ and $\ma_n=(\mu_n^1,\cdots,\mu_n^{|I_n|})$.

Of particular relevance to the fusion category setting are sequences $\mS$ with
 eventually \emph{cyclically $p$-partite} Bratteli diagrams.  That is, for which there exists a level $k$ and a (minimal) period $p\geq 1$ such that for all $n\geq 0$ the subgraphs for $A_{k+np}\subset A_{k+np+1}\subset\cdots\subset A_{k+(n+1)p-1}$ are isomorphic.  If the Bratteli diagram is cyclically $p$-partite then there is an ordering of the simple $A_i$-modules so that $G_{k+t}=G_{k+s}$ if $t\equiv s\mod{p}$.  The combinatorial data for a sequence $\mS$ with cyclically $p$-partite Bratteli diagram can be computed from the dimension vector $\md_k$ for $A_k$ and the matrices $G_k,\ldots,G_{k+p-1}$.  Identifying the $k$ and $k+p$th levels gives a finite (annular) digraph which can be decomposed into its connected components.  These connected components will be strongly connected: they each contain a circuit and no vertex can be terminal (as $A_i\subset A_{i+1}$ implies each $A_i$-module is contained in the restriction of some $A_{i+1}$-module).  If there is only one component (as will be the case for fusion categories) then the associated adjacency matrix will be irreducible with period $p$.

\begin{prop}\label{combprop}
 Suppose $\mS$ is a sequence of multi-matrix algebras with cyclically $p$-partite Bratteli diagram after level $k$.  Moreover, assume that the finite graph obtained by identifying the $k$th and $k+p$th levels is strongly connected.  Let $G^{(i)}=\prod_{j=0}^{p-1} G_{i+k+j}$ be the inclusion matrix of $A_{k+i}$ in $A_{k+i+p}$. Then $\mS$ can be combinatorially localized(at depth $k$) only if,
\begin{enumerate}
 \item[(i)] the Perron-Frobenius eigenvalue $\Lambda_i$ of $G^{(i)}$ is integral for all $i$ and
\item[(ii)] the localization vectors $\ma_{k+i}$ are eigenvectors for $G^{(i)}$ with eigenvalue $\Lambda_i$.
\end{enumerate}
\end{prop}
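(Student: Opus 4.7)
The plan is to iterate condition (b) into a single identity expressing a power of $G^{(i)}$ in terms of $\ma_{k+i}$ and $\ma_{k+i+np}$, and then apply the Perron-Frobenius Theorem~\ref{pfthm} to the irreducible matrix $G^{(i)}$ to squeeze $\ma_{k+i}$ into its Perron eigenspace.

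First I would iterate $m\ma_n=G_n\ma_{n+1}$ in groups of $p$. The cyclic $p$-partite structure identifies simples at levels differing by $p$ so that $G_{k+i+jp}$ becomes $G_{k+i}$ under this identification, and repeated application yields
\[
m^{np}\ma_{k+i}\;=\;(G^{(i)})^n\,\ma_{k+i+np}\qquad (n\ge 0).
\]
Strong connectedness of the identification graph makes $G^{(i)}$ an irreducible non-negative integer matrix, so Theorem~\ref{pfthm} supplies strictly positive right and left Perron eigenvectors $\mathbf{v},\mathbf{w}$ with common eigenvalue $\Lambda_i$. Contracting the identity above with $\mathbf{w}^T$ gives
\[
\mathbf{w}^T\ma_{k+i+np}\;=\;(m^p/\Lambda_i)^n\,\mathbf{w}^T\ma_{k+i}.
\]
Because $\ma_{k+i+np}$ has positive integer entries and $\mathbf{w}$ has strictly positive minimum entry, the left side is bounded below by $\min(\mathbf{w})>0$ for all $n$ (ruling out $m^p<\Lambda_i$) and at the same time one gets the upper bound $\|\ma_{k+i+np}\|_1 = O((m^p/\Lambda_i)^n)$.

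The key step is to feed in the primitive-case asymptotic $(G^{(i)})^n = \Lambda_i^n P + R_n$, where $P=\mathbf{v}\mathbf{w}^T/(\mathbf{w}^T\mathbf{v})$ is the rank-one Perron projection and $\|R_n\|=O(|\alpha|^n)$ for some second-largest modulus eigenvalue $|\alpha|<\Lambda_i$. Using the formula for $\mathbf{w}^T\ma_{k+i+np}$, the leading term simplifies to $\Lambda_i^n P\ma_{k+i+np} = m^{np}P\ma_{k+i}$, and the iterated recurrence rewrites as
\[
m^{np}(I-P)\ma_{k+i}\;=\;R_n\,\ma_{k+i+np}.
\]
The right side has norm bounded by $\|R_n\|\,\|\ma_{k+i+np}\| = O((|\alpha|m^p/\Lambda_i)^n)$, so dividing by $m^{np}$ gives $\|(I-P)\ma_{k+i}\| = O((|\alpha|/\Lambda_i)^n) \to 0$. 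Since the left side is independent of $n$, it must vanish, so $\ma_{k+i}$ is a positive scalar multiple of $\mathbf{v}$ and $G^{(i)}\ma_{k+i}=\Lambda_i\ma_{k+i}$, which is (ii). Claim (i) is then immediate: this last identity is an equation between nonzero integer vectors, forcing $\Lambda_i\in\Q$, and since $\Lambda_i$ is also an eigenvalue of an integer matrix it is an algebraic integer, hence a (positive) integer.

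The main obstacle I anticipate is the case where $G^{(i)}$ is irreducible but not primitive, i.e.\ has period $p'>1$. Then $(G^{(i)})^n/\Lambda_i^n$ does not converge and the asymptotic used above fails in that form. The remedy is to run the argument on $(G^{(i)})^{p'}$, which by Theorem~\ref{pfthm}(a) is block diagonal with primitive blocks whose top eigenvalue is $\Lambda_i^{p'}$; this places $\ma_{k+i}$ inside the $\Lambda_i^{p'}$-eigenspace of $(G^{(i)})^{p'}$. A supplementary argument exploiting the single-step relation $m^p\ma_{k+i}=G^{(i)}\ma_{k+i+p}$ and the cyclic action of $G^{(i)}$ on the block Perron eigenvectors is then required to promote this into the stated eigenvector relation for $G^{(i)}$ itself.
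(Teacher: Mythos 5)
Your proof is correct and follows essentially the same route as the paper: iterate the defining relation to obtain $m^{np}\ma_{k+i}=(G^{(i)})^n\ma_{k+i+np}$, invoke Theorem~\ref{pfthm} for the primitive matrix $G^{(i)}$ to pin $\ma_{k+i}$ to the Perron ray, then deduce rationality and hence integrality of $\Lambda_i$. Your closing worry about $G^{(i)}$ being merely irreducible is moot under the stated hypotheses: the minimality of the period $p$ in the definition of ``cyclically $p$-partite'' together with strong connectedness is exactly what the paper uses to conclude that the $G^{(i)}$ are primitive, so the fallback argument via $(G^{(i)})^{p'}$ is not needed.
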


\begin{proof}
The assumption that the graph is strongly connected implies that the matrices $G^{(i)}$ are primitive and hence the Perron-Frobenius eigenvalue is simple.
We will first show that $\ma_k$ is a Perron-Frobenius eigenvector for $G:=G^{(0)}$.
Assuming that $\mS$ can be combinatorially localized we see that
\begin{equation}\label{pf1}
 m^p\ma_k=G\ma_{k+p}.
\end{equation}
  For ease of notation let us define $\alpha_0:=\ma_k$, $\alpha_n:=\ma_{k+pn}$ and $M=m^p$.  In this notation (\ref{pf1}) implies:
\begin{equation}\label{pf2}
 M^n\alpha_0=G^n\alpha_n
\end{equation}
for all $n\geq 0$.  Let $\Lambda$ denote the Perron-Frobenius eigenvalue of $G$ and let $\mathbf{v}$ and $\mathbf{w}$ be positive right and left eigenvectors respectively, normalized so that $\mathbf{w}\mathbf{v}=1$.  Rewriting (\ref{pf2}) we have:
\begin{equation}\label{pf3}
 \alpha_0=\frac{\Lambda^n}{M^n}\left(\frac{G}{\Lambda}\right)^n \alpha_n.
\end{equation}
Since the left-hand-side of equation (\ref{pf3}) is constant with respect to $n$ we may take a limit and apply the Perron-Frobenius Theorem (for primitive matrices) to obtain:
\begin{equation}
 \alpha_0=\lim_{n\rightarrow\infty}
\frac{\Lambda^n}{M^n}\mathbf{v}\mathbf{w}\alpha_n=\lim_{n\rightarrow\infty}
\left(\frac{\Lambda^n\mathbf{w}\alpha_n}{M^n}\right)\mathbf{v}.
\end{equation}
In particular the scalar part of the limit
$$K:=\lim_{n\rightarrow\infty}
\left(\frac{\Lambda^n\mathbf{w}\alpha_n}{M^n}\right)$$
exists so that $\alpha_0=K\mathbf{v}$.  Thus $\alpha_0=\ma_k$ is a non-zero multiple of $\mathbf{v}$ and hence a Perron-Frobenius eigenvector for $G$.  Similarly, each $\alpha_j$ is also a Perron-Frobenius eigenvector for $G$.  This implies that $\Lambda_0=\Lambda$ is rational since $G$ and $\alpha_0$ are integral.  But $G$ is an integer matrix so its eigenvalues are algebraic integers, which implies that $\Lambda_0$ is a (rational) integer.  The same argument shows that each $\ma_{k+i}$ is a Perron-Frobenius eigenvector for $G^{(i)}$.

\end{proof}

We wish to apply Proposition \ref{combprop} in the setting of (unitary) braided fusion categories (see \cite{BK} for relevant definitions).  In particular,
 let $X$ be a simple object in a braided fusion category $\CC$, and $\CC[X]$ the full braided fusion subcategory generated by $X$ (i.e. the simple objects of $\CC[X]$ are the simple subobjects of $X^{\ot n}$ $n\geq 0$).  That the centralizer algebras $\End(X^{\ot n})$ are semisimple, finite-dimensional $\C$-algebras follows from the axioms.  The simple $\End(X^{\ot n})$-modules are $\Hom(Z,X^{\ot n})$ where $Z$ is a simple object in $\CC[X]$.  The Bratteli diagram for the sequence $$\mS_X:=\End(\one)\cong\C\subset\cdots\subset\End(X^{\ot n})\subset\End(X^{\ot n+1})\subset\cdots$$ (where the inclusion $\End(X^{\ot n})\rightarrow\End(X^{\ot n+1})$ is given by $f\rightarrow f\ot Id_X$) is intimately related to the fusion matrix $N_X$ of $X$ in $\CC[X]$ with entries $(N_X)_{Z,Y}:=\dim\Hom(X\ot Y,Z)$.  We may label the vertices of the Bratteli diagram by pairs $(n,Z)$ where $Z$ is a simple subobject of $X^{\ot n}$, and connect $(n,Z)$ to $(n+1,Y)$ by an $\dim\Hom(X\ot Z, Y)$ arcs.  Now the number of vertices at each level is clearly bounded (by the rank of $\CC[X]$).  Also, the unit object appears in $X^{\ot k}$ for some (minimal) $k$ (see \cite[Appendix F]{DGNO2}), so that if $Y$ appears in $X^{\ot n}$ it appears again in $X^{\ot k+n}$.  This implies that the fusion matrix $N_X$ is irreducible with some period $p$ (dividing $k$) and the associated finite annular digraph $\Gamma(N_X)$ is strongly connected.  For large enough $n$, each inclusion matrix $G_n$ for $\End(X^{\ot n})\subset\End(X^{\ot n+1})$ is the transpose of a submatrix of $N_X$.  Thus the Bratteli diagram for $\mS_X$
  is eventually cyclically $p$-partite with strongly connected annular finite graph.  The Perron-Frobenius eigenvalue of $N_X$ is denoted $\FPdim(X)$.

The examples we have in mind are the (pre-)modular categories associated with quantum groups at roots of unity.  For each simple Lie algebra $\g$ and a $2\ell$th root of unity $q$ one may construct such a category which we denote by $\CC(\g,q,\ell)$.  For details we refer the reader to \cite{BK} and the survey \cite{Rsurvey}.  In the physics literature these categories are often denoted by $(G,k)$ where $G$ is the compact Lie group with Lie algebra $\g$ and $k$ is the level, which is a linear function of the dual Coxeter number of $\g$ and the degree $\ell$ of the root of unity $q^2$.  In particular, the Jones representations of $\B_n$ corresponding to $SU(2)$ at level $k$ may be identified with the representations coming from tensor powers of $X\in\CC(\ssl_2,q,k+2)$ where $X$ is the object analogous to the ``vector representation'' of $\ssl_2$.

The following is a necessary condition for localizability under some additional assumptions.
\begin{theorem}\label{thm:main} Let $X$ be a simple object in a braided fusion category $\CC$ such that
\begin{enumerate}
 \item[(a)]  $\C\rho_n^X(\B_n)=\End(X^{\ot n})$ for all $n\geq 2$ (i.e. the image of $\B_n$ generates $\End(X^{\ot n})$ as an algebra), and
\item[(b)] $(\rho_n^X,\End(X^{\ot n}))$ is localizable
\end{enumerate}
then $\FPdim(X)^2\in\N$.
\end{theorem}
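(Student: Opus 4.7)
The plan is to combine Proposition \ref{combprop} with the elementary observation that the Frobenius-Perron vector of the fusion matrix $N_X$ realizes the Perron-Frobenius eigenvector of the inclusion matrices $G_n$. Hypothesis (a) identifies $\C\rho_n^X(\B_n)$ with the centralizer algebra $\End(X^{\ot n})$, so by hypothesis (b) the sequence $\mS_X$ is combinatorially localizable. The paragraph immediately before the theorem statement shows that the Bratteli diagram of $\mS_X$ is eventually cyclically $p$-partite with strongly connected associated graph, where $p$ is the period of $N_X$. Applying Proposition \ref{combprop} therefore produces positive-integer localization vectors $\ma_{k+i}$ that are Perron-Frobenius eigenvectors of $G^{(i)}$ with integer eigenvalue $\Lambda_i$.

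Next I would identify these eigenvectors explicitly with the Frobenius-Perron vector $\mathbf{F}\colon Z\mapsto\FPdim(Z)$. Because $\FPdim$ is a ring homomorphism on the fusion ring, the identity $\FPdim(X)\FPdim(Y)=\sum_{Z}(N_X)_{Z,Y}\FPdim(Z)$ makes $\mathbf{F}$ a left eigenvector of $N_X$ with eigenvalue $\FPdim(X)$. For $n$ large enough, $G_n$ is (up to transpose) the submatrix of $N_X$ linking $\mathcal{O}_{n\bmod p}$ and $\mathcal{O}_{(n+1)\bmod p}$, so $\mathbf{F}|_{\mathcal{O}_i}$ is a right eigenvector of $G^{(i)}$ with eigenvalue $\FPdim(X)^{p}$. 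Since $G^{(i)}$ is primitive, its Perron-Frobenius eigenspace is one-dimensional, so $\ma_{k+i}=c_i\,\mathbf{F}|_{\mathcal{O}_i}$ for some positive scalar $c_i$.

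The final step is to extract integrality. Choose $i$ with $k+i\equiv 0\pmod{p}$, so that the components of $\ma_{k+i}$ are indexed by $\mathcal{O}_0$ and in particular include $\one$. Evaluating the identity $\ma_{k+i}=c_i\mathbf{F}|_{\mathcal{O}_0}$ at $\one$ yields $c_i\in\N$, since the left-hand side has positive-integer entries and $\FPdim(\one)=1$. Then $c_i\FPdim(Z)\in\N$ for every simple $Z\in\mathcal{O}_0$, so each such $\FPdim(Z)$ is rational and, as a positive algebraic integer, lies in $\N$. Finally $X\ot X^{\ast}$ decomposes into simples all lying in $\mathcal{O}_0$ (orbit indices add modulo $p$, while $X\in\mathcal{O}_1$ and $X^{\ast}\in\mathcal{O}_{p-1}$), so
\[
\FPdim(X)^{2}=\FPdim(X\ot X^{\ast})=\sum_{Y}\dim\Hom(Y,X\ot X^{\ast})\,\FPdim(Y)
\]
is a non-negative integer combination of positive integers, and therefore $\FPdim(X)^{2}\in\N$.

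The main obstacle will be the middle step: verifying that once one is past the initial transient, $G_n$ really is the transpose of the submatrix of $N_X$ linking two consecutive orbits, so that the restriction $\mathbf{F}|_{\mathcal{O}_i}$ is forced to span the one-dimensional Perron-Frobenius eigenspace of $G^{(i)}$. Once this identification is secured, the integrality conclusion is essentially dictated by $\mathbf{F}(\one)=1$ together with the positivity and integrality of the entries of $\ma_{k+i}$.
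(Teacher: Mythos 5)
Your argument is correct, but it diverges from the paper's proof at the key step. After invoking Proposition \ref{combprop}, the paper records only that $\Lambda=\lambda^p\in\Z$ (where $\lambda=\FPdim(X)$), so that $\lambda$ satisfies $x^p-b\in\Z[x]$, and then appeals to the fact that $\lambda$ lies in a cyclotomic (hence abelian) extension of $\Q$ (\cite[Corollary 8.54]{ENO}) to force the minimal polynomial of $\lambda$ to be $x^s-c$ with $s\in\{1,2\}$. You instead exploit conclusion (ii) of Proposition \ref{combprop} together with the normality of $N_X$ (or, as you do it directly, the ring-homomorphism property of $\FPdim$) to identify the localization vectors $\ma_{k+i}$ with positive multiples of the restricted Frobenius--Perron vector, and then extract integrality by evaluating at $\one\in\mathcal{O}_0$, where $\FPdim(\one)=1$. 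The step you flag as an obstacle --- that for $n$ past the transient $G_n$ is the transpose of the submatrix of $N_X$ connecting consecutive orbits --- is already asserted in the paragraph preceding the theorem, so there is no gap; and the FP-eigenvector identification you use is precisely the content of Remark \ref{combproprem}(2), just marshalled into the proof rather than left as a remark. Your route avoids the cyclotomic input entirely and in fact yields the stronger conclusion $\FPdim(Z)\in\N$ for every simple $Z\in\mathcal{O}_0$, from which $\FPdim(X)^2=\FPdim(X\ot X^\ast)\in\N$ follows as you indicate. The paper's route is shorter but imports a nontrivial external theorem; yours is longer but elementary and more self-contained, and it makes the structure of the localization vectors do the arithmetic work.
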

Observe that under these hypotheses, we do not require the representations $\rho_n^X$ to be unitary as complete reducibility of the $\B_n$-representations follows from the semisimplicity of $\End(X^{\ot n})$, by hypothesis (a).

\begin{proof}

It is enough to show that if the sequence of algebras $\cdots\subset\End(X^{\ot n})\subset\End(X^{\ot n+1})\subset\cdots$ is combinatorially localizable then $\FPdim(X)^2\in\N$.
We must verify that in this case the Perron-Frobenius eigenvalue $\Lambda$ in Proposition \ref{combprop} is a power of the Perron-Frobenius eigenvalue $\lambda$
of the fusion matrix $N_X$.  We have already observed that the corresponding Bratteli diagram is cyclically $p$-periodic after some level $k$.  Let $N_X$ be the fusion matrix for $X$ in the subcategory it generates, and choose $p$ and $k$ to be minimal.  Then $N_X$ is irreducible with period $p$ and the corresponding primitive blocks of $N_X^p$ are $G^{(i)}$, so that by Theorem \ref{pfthm} each has $\lambda^p$ as Perron-Frobenius eigenvalue. Now by Proposition \ref{combprop} $\lambda$ must satisfy $x^t-b\in\Z[x]$ so that the minimal polynomial of $\lambda$ is of the form $x^s-c$.  But $\lambda$ must reside in an abelian (cyclotomic) extension of $\Q$ (see \cite[Corollary 8.54]{ENO}).  This implies that $s=1$ or $2$.
\end{proof}

\begin{figure}[t0]\caption{Bratteli diagram for $\CC(\ssl_2,q,5)$}
$\xymatrix{ X\ar[d]\ar[dr]\\  \one\ar[d] & Y\ar[d]\ar[dl] \\ X\ar[d]\ar[dr] & Z\ar[d]\\\one & Y}$
\label{fig:fib}
\end{figure}

\begin{ex}
 We illustrate Theorem \ref{thm:main} with an example corresponding to $\CC(\ssl_2,q,5)$, which has as a subcategory the well-known ``Fibonacci theory."  Here we have just $4$ simple objects $\one,X,Y$ and $Z$ with Bratteli diagram in Figure \ref{fig:fib}.  Moreover, the image of the braid group generates the algebras $\End(X^{\ot n})$ (see, eg. \cite{LZ}).  Observe that the Bratteli diagram is cyclically $2$-partite in this case. The corresponding Perron-Frobenius eigenvalue is $\frac{1+\sqrt{5}}{2}=\FPdim(X)$, so that $(\rho_n^X,\End(X^{\ot n}))$ is not localizable.  The subcategory generated by $Y$ is the (rank $2$) Fibonacci theory.
\end{ex}

\begin{remark}\label{combproprem}
\begin{enumerate}
 \item Observe that under hypotheses (a) and (b) if $p$ is odd then we can conclude that $\FPdim(X)$ is actually an integer.  For example if $X$ is a simple object in a weakly integral modular category $\CC$ such that $X^{\ot 3}\supset\one$ then $\FPdim(X)\in \Z$, whereas in general one can only conclude $\FPdim(X)^2\in\Z$.
\item In a braided fusion category $N_X$ and $N_X^T$ commute, that is, $N_X$ is a normal operator.  This implies that the same is true for any of the blocks $G^{(i)}$ of $(N_X)^p$.  In particular any Perron-Frobenius eigenvector for $G^{(i)}$ is also a
FP-eigenvector for $(G^{(i)})^T$.  But we already know one such eigenvector: the vector of FP-dimensions of the simple subobjects of $X^{\ot i}$.  Thus we see that the localization vectors $\ma_i$, if they exist, must be multiples of the vectors of FP-dimensions.
\item It is expected that condition (a) that the image of $\B_n$ generate the full centralizer algebras $\End(X^{\ot n})$ is superfluous (in the unitary setting).  In general, restricting to the image of $\C\B_n$ gives a refinement of the Bratteli diagram which may fail to be cyclically $p$-partite.  For an easy example of when this occurs consider the symmetric fusion category $\Rep(G)$ for $G$ a finite group possessing a $2$-dimensional irreducible representation $V$.  Then the usual ``flip'' map $P$ provides a localization for the braid group representation on $\End(V^{\ot n})$.  But the number of simple $\C\rho_P(\B_n)$-modules does not stabilize so the Bratteli diagram is not cyclically $p$-partite.
\item  It is known that if $\g$ is of type $A$, $B$ or $C$ and the object $X\in\CC(\g,q,\ell)$ is analogous to the vector representation then the braid group image does generate the centralizer algebras $\End(X^{\ot n})$. This is also known to be the case for the simple object in $X \in\CC(\g_2,q,\ell)$ corresponding to the $7$-dimensional representation of $\g_2$.  For type $A$ this is essentially Jimbo's original quantized version of Schur-Weyl duality between the Hecke algebras and the quantum groups of Lie type $A$.  For types $B$ and $C$ the Hecke algebra is replaced by $BMW$-algebras, see \cite{wenzlbcd}.  For $\g_2$ the result follows from \cite[Theorem 8.5]{LZ}.
The values of $\ell$ for which the categories $\CC(\g,q,\ell)$ fail to have property $\mathbf{F}$ are mostly known, see \cite{J86,FLW,LRW,RUMA}.
\end{enumerate}

\end{remark}

\section{Jones representation at levels $2$ and $4$}
In this section we will show that the (unitary) Jones representations corresponding to levels $2$ and $4$ are localizable.

For level $2$ an explicit localization appears in \cite{FRW}.  In quantum group notation this corresponds to the rank $3$ braided fusion category $\CC(\ssl_2,q,4)$.  The objects are $\one,X$ and $Z$ where $\FPdim(X)=\sqrt{2}$ and $\FPdim(Z)=1$.  The corresponding quantum field theory is closely related to the well-known Ising theory.  The Bratteli diagram is:
$$\xymatrix{X\ar[d]\ar[dr]\\ \one\ar[d] & Z\ar[dl]\\ X}$$ and the matrix $$\frac{-e^{-\pi i/4}}{\sqrt{2}}\begin{pmatrix}1 & 0 & 0 & 1\\0 & 1& -1 & 0\\0 & 1&1& 0\\-1 & 0 & 0
&1\end{pmatrix}$$ gives an explicit localization (see \cite[Section 5]{FRW}).

The category $\CC(\ssl_2,q,6)$, corresponding to level $4$, is a rank $5$ category with simple objects $\one,Z$ of dimension $1$, $Y$ of dimension $2$ and $X,X^\prime$ of dimension $\sqrt{3}$.  The fusion rules for this category are determined by:
\begin{eqnarray}
&& X\ot X\cong \one\oplus Y,\quad X\ot X^\prime\cong Z\oplus Y\\
&& X\ot Y\cong X\oplus X^\prime, \quad Z\ot X\cong X^\prime.
\end{eqnarray}
The Bratteli diagram (starting at level 1) is shown in Figure \ref{fig:lis6}, observe that it is cyclically $2$-partite after the third level.
We have $\overline{TL}_n(q^2)\cong\End(X^{\ot n})$ for each $n$, where the isomorphism is induced by
$$g_i\leftrightarrow Id_X^{\ot i-1}\ot c_{X,X}\ot Id_X^{\ot n-i-1}\in\End(X^{\ot n}).$$
Here $c_{X,X}$ is the (categorical) braiding on the object $X$.
The irreducible sectors of $\overline{TL}_n(q^2)$ under this isomorphism are the $\End(X^{\ot n})$-modules $H_{n,W}:=\Hom(W,X^{\ot n})$ where $W$ is one of the $5$ simple objects in $\CC$.  Observe that for $n$ even $W$ must be one of $\one,Y$ or $Z$ while for $n$ odd $W$ is either $X$ or $X^\prime$.  We have the following formulae for the dimensions of these irreducible representations (for $n$ odd):

$$
\dim\Hom(X,X^{\ot n})=\frac{3^{\frac{n-1}{2}}+1}{2},
\quad \dim\Hom(X^\prime,X^{\ot n})=\frac{3^{\frac{n-1}{2}}-1}{2},$$
$$
\dim\Hom(\one,X^{\ot n+1})=\frac{3^{\frac{n-1}{2}}+1}{2}, \quad \dim\Hom(Y,X^{\ot n+1})=3^{\frac{n-1}{2}},
$$$$ \dim\Hom(Z,X^{\ot n+1})=\frac{3^{\frac{n-1}{2}}-1}{2}
$$

\begin{figure}[t0]\caption{Bratteli diagram for $\CC(\ssl_2,q,6)$}
$\xymatrix{ X\ar[d]\ar[dr] & \\ \one\ar[d] & Y\ar[dl]\ar[d]\\X\ar[d]\ar[dr] & X^\prime\ar[d]\ar[dr]\\ \one\ar[d] & Y\ar[dl]\ar[d] & Z\ar[dl]\\X\ar[d]\ar[dr] & X^\prime\ar[d]\ar[dr]\\\one & Y & Z }$
\label{fig:lis6}
\end{figure}

\begin{theorem}\label{loc6}
 The unitary Jones representations corresponding to $TL_n(q^2)$ with $q=e^{\pm\pi i/6}$ can be localized.
\end{theorem}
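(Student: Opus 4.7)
The plan is to exhibit an explicit $3$-dimensional braided vector space $(W, R)$ localizing the sequence $(\rho_n^X, \End(X^{\ot n}))$ for $X$ the generating object of $\CC(\ssl_2, q, 6)$ at $q = e^{\pm \pi i / 6}$.

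First I would pin down the combinatorial target. Since $\FPdim(X)^2 = 3$, Proposition \ref{combprop} together with Remark \ref{combproprem}(2) forces the localization vectors $\ma_n$ to be positive multiples of the FP-dimension vectors of the simple summands at each level. Combined with the displayed dimension formulas and the requirement $m^n = \lan \ma_n, \md_n \ra$, this pins down $m = \dim W = 3$ and
$$\ma_{2k-1} = 3^k (1, 1), \qquad \ma_{2k} = 3^k (1, 2, 1) \quad (k \geq 2),$$
with $\ma_2 = (3, 6)$ as an exception since $Z$ does not appear in $X^{\ot 2}$. Reading off the inclusion matrices $G_n$ from Figure \ref{fig:lis6}, a direct check confirms $m \ma_n = G_n \ma_{n+1}$ for all $n \geq 1$, so the sequence is combinatorially localizable.

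Second, I would write down an explicit $9 \times 9$ unitary matrix $R$ on $W \otimes W$ satisfying the braided Yang-Baxter equation. The level-$2$ combinatorics forces $R$ to have exactly two eigenvalues $\beta_\one, \beta_Y$ with multiplicities $3$ and $6$, whose ratio equals that of the categorical braiding $c_{X,X}$ on the $\one$- and $Y$-isotypic components of $X \otimes X$. Equivalently, $R$ must satisfy the quadratic Temperley-Lieb relation $(R - \beta_\one I)(R - \beta_Y I) = 0$ at $q = e^{\pm \pi i / 6}$, so that $\rho_R$ automatically factors through $TL_n(q^2)$. To make the search for $R$ tractable I would use a symmetry-adapted ansatz, e.g.\ a monomial form $R = DP$ (with $D$ diagonal and $P$ a permutation) plus a rank-one correction, or an ansatz equivariant under a cyclic or dihedral group action on $W$; such ansätze reduce the YBE to a small polynomial system.

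Third, given such an $R$, it remains to verify that $\rho_R$ descends to the semisimple quotient $\overline{TL}_n(q^2)$ and realises the prescribed multiplicities. The descent amounts to checking that the Jones-Wenzl idempotent $p_\ell$ generating the kernel of $TL_n(q^2) \twoheadrightarrow \overline{TL}_n(q^2)$ vanishes under $\rho_R$; this may be done either by computing $\rho_R(p_\ell)$ directly on $W^{\otimes \ell}$ or by comparing the Markov trace induced by $R$ with the Jones-Markov trace. Once $\rho_R$ is a faithful $\overline{TL}_n(q^2)$-action, the Perron-Frobenius uniqueness from the first step forces the decomposition of $W^{\otimes n}$ to agree with $\ma_n$ for every $n$, establishing the localization. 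The principal obstacle is the second step: producing an explicit unitary $R$ satisfying YBE, unitarity, and the prescribed spectral constraints simultaneously. The YBE is an overdetermined cubic system in $81$ complex variables and generic spectral data admit no unitary solutions (cf.\ Section \ref{ybo}), so even the existence of $R$ is nontrivial; the symmetry-adapted ansätze above are essentially forced on us, and verifying the resulting matrix by hand is the bulk of the work.
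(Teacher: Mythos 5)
Your outline is sound as far as it goes: the combinatorial reduction via Proposition \ref{combprop} and Remark \ref{combproprem}(2) correctly pins down $m=3$ and the multiplicity vectors $\ma_n$, the spectral constraint on $R$ (two eigenvalues with multiplicities $3$ and $6$ in the ratio of the categorical braiding eigenvalues) is exactly right, and the descent-and-faithfulness logic (check $p_5\mapsto 0$, check $p_i\not\mapsto 0$ for $i<5$, then use the forced multiplicities) is the same skeleton the paper uses. But there is a genuine gap at the heart of the argument, and you flag it yourself: you never actually produce the unitary $R$-matrix, only a search strategy (monomial-plus-rank-one or group-equivariant ansatz). Since Theorem \ref{loc6} is a constructive existence statement, and since --- as you observe --- the YBE with unitarity and prescribed spectrum is a heavily overdetermined system with no a priori guarantee of a solution, the plan as written does not prove the theorem. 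Everything after ``given such an $R$'' is conditional on an object you have not exhibited.

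The paper closes precisely this gap by writing down an explicit $9\times 9$ matrix
$R = \tfrac{i}{\sqrt 3}\,(\text{matrix with entries in }\{\,0,\om,\om^2,1\,\})$,
obtained by modifying a two-parameter YBE solution from \cite{Xue} (and, as they remark, also derivable from Jones' extraspecial $3$-group construction in \cite{J86} --- essentially your ``group-equivariant'' ansatz made precise). They then verify directly that $R$ is unitary, satisfies the YBE, has eigenvalues $-1$ (multiplicity $6$) and $e^{-\pi i/3}$ (multiplicity $3$), that $E=(R+I)/(e^{-\pi i/3}+1)$ satisfies the Temperley--Lieb relations, that $\rho_R(p_5)=0$ (by a finite computation on $W^{\ot 5}$, either checking there is no common $(-1)$-eigenvector of $R_1,\dots,R_4$ or computing the $3^7\times 3^7$ matrix of $p_5$ directly), and that $\rho_R(p_i)\neq 0$ for $i<5$; faithfulness then follows either from \cite[Prop.\ A2.1, Thm.\ A3.3]{Freedman2} or from the dimension-count argument you also sketch. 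So your verification plan matches the paper's, but the load-bearing step --- exhibiting $R$ --- is the one piece you leave open.
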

\begin{proof}
 We will show that the matrix

$R=\frac{i}{\sqrt{3}}\begin{pmatrix}\om&0&0&0&1&0&0&0&\om\\ 0&\om&0&0
&0&\om&1&0&0\\ 0&0&\om&
\om^2&0&0&0&\om^2&0
\\ 0&0&\om^2&\om&0&0&0&\om^2&0
\\ \om&0&0&0&
\om&0&0&0&1\\ 0&1&0
&0&0&\om&\om&0&0\\ 0&
\om&0&0&0&1&\om&0&0
\\ 0&0&\om^2&\om^2&0&0&0&\om&0\\ 1&0
&0&0&\om&0&0&0&\om\end {pmatrix}$

induces a faithful representation $\varphi_n: TL_n(q^2)\rightarrow \End(V^{\ot n})$ ($n\geq 2$) where $V=\C^3$ via $\varphi(g_i)=R_i.$

We first verify that $g_i\rightarrow R_i$ is a unitary representation of the specialized (non-semisimple) Temperley-Lieb algebra $TL_n(q^2)$.  Observe that the matrix $R$ is indeed unitary for either choice of $\omega$, so we fix one and proceed.  It is a computation to see that $R$ satisfies the YBE and has eigenvalues $-1$ (multiplicity 6) and $q=e^{-\pi i/3}$ (multiplicity 3).  We set $E=\frac{R+I}{e^{-\pi i/3}+1}$ so that $E_i=\frac{R_i+I}{e^{-\pi i/3}+1}$ satisfy the defining relations of $TL_n(q^2)$ for $q=e^{-\pi i/6}$:
\begin{eqnarray}
 E_i^2&=&E_i\\
E_iE_{i\pm 1}E_i&=&1/3E_i\\
E_iE_j&=&E_jE_i
\end{eqnarray}
where $|i-j|>1$ as required.  Thus $\varphi_n$ does indeed induce
a representation of $TL_n(q^2)$.

Next we show that $\varphi_n$ actually factors over the semisimple quotient $\overline{TL}_n(q^2):=TL_n(q^2)/(p_5)$ where $p_5$ is the Jones-Wenzl idempotent at the $5$th level.  Observe that $p_5$ generates a $1$ dimensional $\B_5$ representation in $TL_5(q^2)$, with corresponding eigenvalue $-1$.  So it is enough to check that there is no simultaneous eigenvector for $R_1,R_2,R_3$ and $R_4$ (in $3^7$ variables).  A computer calculation verifies this.  Alternatively, one may use the inductive definition of $p_5$ and compute the corresponding $3^7\times 3^7$ matrix and check that it is the $\mathbf{0}$-matrix.  This is computationally more difficult but also checks out.

Finally, we must verify that $\varphi_n$ induces a \emph{faithful} representation of $\overline{TL}_n(q^2)$.  For this it is enough to check that the Jones-Wenzl idempotents $p_i$ do not vanish for $i<5$ by \cite[Prop. A2.1 and Thm. A3.3]{Freedman2}.  This is routine, one simply checks that $R_1,\ldots,R_{i-1}$ do have a simultaneous eigenvector of eigenvalue $-1$. A more direct proof goes as follows: for $n$ odd, $\overline{TL}_n(q^2)$ has $2$ simple modules $W_1(n)$ and $W_2(n)$ of dimensions $\frac{3^{\frac{n-1}{2}}\pm 1}{2}$. So $V^{\ot n}$ must decompose as a $\mu_1 W_1(n)\oplus \mu_2W_2(n)$ for some multiplicities $\mu_i$, since $\varphi_n$ factors over $\overline{TL}_n(q^2)$.  But $\dim(V^{\ot n})=3^n$ so we must have $\mu_i>0$ and $\varphi_n$ is a faithful representation of $\overline{TL}_n(q^2)$ for $n$ odd.  Now the restriction $\varphi_{n-1}$ of $\varphi_n$ contains the restriction of $W_1(n)\oplus W_2(n)$ to $\overline{TL}_{n-1}(q^2)$, which is faithful, so we have the result.
  \end{proof}

With the knowledge that the Jones representations at $q=e^{\pi i/6}$ can be localized,
we can give explicit formulae for the multiplicities of the irreducible sectors of $\overline{TL}_n(q^2)$ appearing in the decomposition of the $3^n$-dimensional representation $(\varphi_n,V^{\ot n})$.  Let us denote by $\mu_{n,W}$ the multiplicity of $H_{n,W}$ in $\varphi_n$.  Then for $k\geq 2$:
\begin{eqnarray*}
 &&\mu_{2k,\one}=3^k,\quad \mu_{2k,Y}=2(3^k), \quad \mu_{2k,Z}=3^k\\
&& \mu_{2k-1,X}=\mu_{2k-1,X^\prime}=3^k
\end{eqnarray*}
While we could give an inductive derivation of these formulas, we instead employ
Proposition \ref{combprop}(ii) (and Remark \ref{combproprem}): the localization vector must be a multiple of the $FP$-eigenvector, i.e. the dimension vector at the given level.  These are: $(1,2,1)$ for $(\one,Y,Z)$ and $(\sqrt{3},\sqrt{3})$ for $(X,X^\prime)$.

We remark that we obtained the localization matrix $R$ in Theorem \ref{loc6} by modifying a solution to the two-parameter version of the YBE found in \cite{Xue}.  With a bit more work we could have obtained these from Jones' \cite{J86} description in terms of automorphisms of $3$-groups.  This approach will be described below in a more general setting.

 The following verifies Conjecture \ref{localintegral} for quantum group categories of Lie type $A_1$, i.e. $\CC(\ssl_2,q,\ell)$.
\begin{cor}\label{cor:nolocal}
 The Jones representation at $q=e^{\pi i/\ell}$ can be localized if and only if $\ell\in\{3,4,6\}$.
\end{cor}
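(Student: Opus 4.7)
The plan is to reduce the ``only if'' direction to Theorem \ref{thm:main} and to handle the ``if'' direction by exhibiting an explicit localization for each of the three surviving values of $\ell$.

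For the ``only if'' direction, I would first check that the hypotheses of Theorem \ref{thm:main} hold for $X\in\CC(\ssl_2,q,\ell)$ the image of the vector representation. Jimbo's quantum Schur--Weyl duality (see Remark \ref{combproprem}(4)) identifies $\C\rho^X_n(\B_n)$ with the semisimple Temperley--Lieb quotient $\overline{TL}_n(q^2)\cong\End(X^{\ot n})$, so hypothesis (a) is automatic, and complete reducibility then gives (b). Hence Theorem \ref{thm:main} applies and localizability forces $\FPdim(X)^2\in\N$. Now $\FPdim(X)=[2]_q=2\cos(\pi/\ell)$, so
\[
\FPdim(X)^2 \;=\; 4\cos^2(\pi/\ell) \;=\; 2+2\cos(2\pi/\ell).
\]
This lies in $\N$ iff $\cos(2\pi/\ell)\in\{-\tfrac12,0,\tfrac12,1\}$; for $\ell\geq 3$ this picks out precisely $\ell\in\{3,4,6\}$ (yielding $\FPdim(X)^2=1,2,3$ respectively). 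All other $\ell$ give irrational $\FPdim(X)^2$, hence no localization can exist.

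For the ``if'' direction, I would exhibit localizations case by case. The $\ell=6$ case is Theorem \ref{loc6}, where the $9\times 9$ matrix $R$ above provides an explicit localization of $(\varphi_n,V^{\ot n})$. The $\ell=4$ case is the Ising-type localization recalled at the beginning of Section 5 (taken from \cite[Section 5]{FRW}), where the $4\times 4$ unitary $R$-matrix realizes the Jones representation corresponding to $\CC(\ssl_2,q,4)$. The $\ell=3$ case is essentially trivial: $\CC(\ssl_2,e^{\pi i/3},3)$ is a pointed category with $X\ot X\cong\one$, so $\End(X^{\ot n})$ is one-dimensional and $\B_n$ acts by a single scalar $\lambda$ (the eigenvalue of $c_{X,X}$). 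A localization is provided by $W=\C$ and $R=\lambda\in\End(W\ot W)$, which trivially satisfies the YBE and produces matching multiplicities $\mu_n=1$.

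The only step that required any real work is the $\ell=6$ case, which is exactly what Theorem \ref{loc6} accomplishes; everything else here is bookkeeping. The main conceptual obstacle, absorbed into the appeal to Theorem \ref{thm:main}, is ensuring that hypothesis (a) there genuinely applies --- this is precisely where the classical Jimbo/Schur--Weyl duality for type $A$ is essential, since without it one would only obtain a refinement of the Temperley--Lieb Bratteli diagram and could not directly invoke the Perron--Frobenius argument of Proposition \ref{combprop}.
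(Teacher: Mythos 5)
Your proposal is correct and follows essentially the same route as the paper: invoke Theorem \ref{thm:main} via quantum Schur--Weyl duality to rule out all $\ell$ with $\FPdim(X)^2=4\cos^2(\pi/\ell)\notin\N$, and point to the explicit localizations (FRW for $\ell=4$, Theorem \ref{loc6} for $\ell=6$, the $1\times1$ $R$-matrix for the pointed $\ell=3$ case) for the converse. The only slight misstatement is the phrase ``complete reducibility then gives (b)'' --- hypothesis (b) of Theorem \ref{thm:main} is the localizability assumption itself, which is the hypothesis of the ``only if'' direction, not a consequence of complete reducibility --- but this does not affect the argument.
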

\begin{proof}  The Jones representation at $q=e^{\pi i/\ell}$ corresponds to the sequence of braid group representations on $\End(X^{\ot n})$ where $X$ is the simple object in $\CC(\ssl_2,q,\ell)$ analogous to the vector representation of $\ssl_2$.  Generalized Schur-Weyl duality between the Temperley-Lieb algebras and the quantum group $U_q\ssl_2$ implies that the braid group image generates the centralizer algebras so we may apply Theorem \ref{thm:main}.  The object $X$ has dimension $q+1/q=2\cos(\pi/\ell)$, which is not the square root of an integer except for these values of $\ell$, so a localization is only possible for such values.  Explicit localizations for $\ell=4$ and $\ell=6$ are described above, while the $\ell=3$ case corresponds to a pointed category (i.e. all simple objects are invertible) so is trivially localizable (by a $1\times 1$ $R$-matrix).
\end{proof}
This should be compared with \cite{J86,FLW} from which it follows that the Jones representations are universal for quantum computation if and only if $\ell\not\in\{3,4,6\}$.  Also it is worth observing that the Jones polynomial evaluations at these roots of unity are ``classical'' (see \cite{jones87}) and can be exactly computed in polynomial time (see \cite{R3} for a discussion of this phenomonon).

\section{Generalizations}

The categories $\CC(\ssl_2,q,4)$ and $\CC(\ssl_2,q,6)$ are members of two families of weakly integral, unitary braided fusion categories.  In this section we describe these families of categories which are denoted as $(SO(N),1)$ and $(SO(N),2)$, respectively.  For $N$ odd, they correspond to quantum group categories $\CC(\so_N,q,\ell)$ for $\ell=2N-2$ and $\ell=2N$, respectively, and for $N$ even for $\ell=N-1$ and $N$, respectively.  These level $2$ categories $(SO(N),2)$ have been studied elsewhere, see \cite{Gan} and \cite{propF}.  The corresponding fusion rules also appeared in older work, see \cite{jonessurvey} and \cite{dBG}.

In light of Conjectures \ref{uybconj}, \ref{localintegral} and \cite[Conjecture 6.6]{RSW}, we expect that that the (unitary) braid group representations associated with $(SO(N),1)$ and $(SO(N),2)$ (a) have finite image and (b) can be localized.  We observe that localizations of $(SO(N),1)$ follow from the localization of $\CC(\ssl_2,q,4)$, and describe a possible localization of $(SO(N),2)$ for odd prime $N$ below.

\subsection{$(SO(N),1)$}
For $N=2r+1$ odd, $\CC(\so_N,q,2N-2)$ has rank $3$ with simple objects $\one$, $X_\ep$ and $X_1$ labeled by highest weights $\mathbf{0}$, $\la_r:=\frac{1}{2}(1,\ldots,1)$ and $\la_1:=(1,0,\ldots,0)$ respectively, with corresponding dimensions $1,\sqrt{2}$ and $1$.  The unnormalized $S$-matrix $\tilde{s}$ is:
$$\begin{pmatrix} 1 & \sqrt{2} & 1\\
   \sqrt{2} & 0 & -\sqrt{2}\\ 1 & -\sqrt{2} & 1
  \end{pmatrix}$$ and twist coefficients: $(1,e^{\pi i N/8},-1)$.  The Bratteli diagram is the same as for $\CC(\ssl_2,q,4)$.  Notice that this infinite family consists of only $8$ distinguishable categories depending on $N\pmod{16}$.  All $8$ theories have the same fusion rule as the Ising theory, which is realized by $(SO(17),1)$.  The corresponding rational conformal theories $(SO(N),1)$ have central charges $c=\frac{N}{2}$.

When $N=2r$ is even, $\CC(\so_N,q,N-1)$ has $4$ simple objects labeled by $\mathbf{0}$, $\la_1:=(1,0,\ldots,0)$, $\la_{r-1}:=\frac{1}{2}(1,\ldots,1,-1)$, and $\la_r:=\frac{1}{2}(1,\ldots,1)$.  Each simple object has dimension $1$ and the twists coefficients are: $(1,-1,e^{\pi i N/8},e^{\pi i N/8})$.  The fusion rules are the same as either $\Z_2\times \Z_2$ or $\Z_4$, but the $S$-matrix depends on the value of $N\pmod{16}$, so again there are only $8$ distinguishable categories in this infinite family.

These $16$ different unitary TQFTs realize nicely the $16$-fold way as algebraic models of anyonic quantum systems which encode anyon statistics \cite{Kitaev}.

We make the following:
\begin{definition}
 If the braid representations from any simple object in a unitary modular tensor category can be localized, we will say the corresponding TQFT can be \textit{localized}.
\end{definition}

\begin{theorem}
The $16$ unitary $(SO(N),1)$ TQFTs all have property \textbf{F},  and can be localized.
\end{theorem}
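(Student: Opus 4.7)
The plan is to separate the two cases by parity of $N$. When $N=2r$ is even, the category $\CC(\so_N,q,N-1)$ is pointed: every simple object has $\FPdim=1$ and is invertible. For any simple object $X$, the tensor powers $X^{\ot n}$ are again simple and invertible, so $\End(X^{\ot n})\cong\C$ and the braid group acts through scalars (successive powers of the braiding eigenvalue $c_{X,X}$). This one-dimensional representation has abelian---hence projectively trivial---image, giving property \textbf{F}; and it is trivially localized by the $1\times 1$ $R$-matrix equal to the relevant braiding scalar, in the same spirit as the $\ell=3$ case of Corollary \ref{cor:nolocal}.

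When $N=2r+1$ is odd, the rank-$3$ category $\CC(\so_N,q,2N-2)$ has the same fusion rules and Bratteli diagram as $\CC(\ssl_2,q,4)$. The invertible objects $\one$ and $X_1$ are handled by the pointed argument above, so the nontrivial case is the $\FPdim=\sqrt{2}$ object $X_\ep$. The first step is to identify $\C\rho_n^{X_\ep}(\B_n)$ with a semisimple specialization of the Temperley--Lieb algebra: since $X_\ep\ot X_\ep\cong\one\oplus X_1$ is a sum of two non-isomorphic invertibles, the braiding $c_{X_\ep,X_\ep}$ has exactly two distinct eigenvalues, so each $\rho^{X_\ep}(\sigma_i)$ satisfies a quadratic relation; combined with the Jones--Wenzl vanishing forced by the Bratteli diagram (no simple object arises past depth $3$), the braid group images factor through a finite-dimensional $\overline{TL}_n(q^2)$ at a primitive fourth root of unity, i.e.\ exactly the level-$2$ Jones/Ising situation. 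Property \textbf{F} for $X_\ep$ then follows from the well-known finiteness of the level-$2$ Jones braid group images.

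For the localization of $X_\ep$, I would reuse the explicit $4\times 4$ unitary $R$-matrix displayed at the start of Section~5 that localizes $\CC(\ssl_2,q,4)$, rescaled by a suitable root of unity $\zeta$ so that its eigenvalues match those of $c_{X_\ep,X_\ep}$. By the ribbon identity $c_{X_\ep,X_\ep}^2=\theta_{X_\ep\ot X_\ep}\theta_{X_\ep}^{-2}$ together with $\theta_{X_\ep}=e^{\pi iN/8}$ and $(\theta_\one,\theta_{X_1})=(1,-1)$, these eigenvalues are $\pm e^{-\pi iN/8}$ (the signs compatible with the quadratic satisfied by the level-$2$ matrix). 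Such a scalar rescaling preserves both the Yang--Baxter equation and unitarity, and because the Bratteli diagrams coincide, the faithfulness verification from the proof of Theorem~\ref{loc6} (one level lower) transports verbatim, producing positive multiplicities $\mu_n^i>0$ as required. The main (and essentially only) obstacle is checking that for each of the eight residue classes $N\pmod{16}$ one can choose $\zeta$ consistently so that the rescaled matrix realizes the genuine categorical braiding rather than merely a projectively equivalent version; this reduces to a bounded finite check across the eight distinguishable theories.
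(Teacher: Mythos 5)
Your proof follows essentially the same route as the paper: for $N$ even the category is pointed, so the braid representations are one-dimensional and a $1\times 1$ $R$-matrix suffices, while for $N$ odd the braid representations coincide with those of $\CC(\ssl_2,q,4)$ and the localization is obtained by a scalar renormalization of the explicit Ising $4\times 4$ $R$-matrix from Section~5. One small slip in the details worth flagging: the two eigenvalues of $c_{X_\ep,X_\ep}$ differ by a factor of $\pm i$ (since $\theta_\one=1$ while $\theta_{X_1}=-1$, the eigenvalue on the $X_1$-component picks up a square root of $-1$), not merely by a sign as written; this does not break your argument, since the eigenvalue ratio of the level-$2$ matrix is likewise $\pm i$ and a single unit scalar $\zeta$ still suffices to match.
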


\begin{proof}
 For $N$ odd, the braid group representations are essentially the same as for $\CC(\ssl_2,q,4)$, while for $N$ even, the simple objects are all invertible which implies that the braid group representations are finite abelian groups.  So for $N$ odd the localization is a renormalization of that of the Jones representations at level $2$.  For $N$ even one takes $R$ to be a constant $1\times 1$ matrix to localize.
\end{proof}

\subsection{$(SO(N),2)$}
The families corresponding to $SO(N)$ level $2$ are genuinely infinite but still display similarities depending on the value of $N\pmod{4}$.  Although we do not make explicit use of all of the following data we include it for completeness.

\subsubsection{$N$ odd}
For $N=2r+1$ odd, the category $\CC(\so_N,q,2N)$ has rank $r+4$.
We use the standard labeling convention for the fundamental weights of type $B$:
$\la_1=(1,0,\ldots,0),\ldots,\la_{r-1}=(1,\ldots,1,0)$ and $\la_r=\frac{1}{2}(1,\ldots,1)$.  Observe that the highest root is $\theta=(1,1,0,\ldots,0)$ and $\rho=\frac{1}{2}(2r-1,2r-3,\ldots,3,1)$.
The simple objects are labeled by:
 $$\{\mathbf{0},2\la_1,\la_1,\ldots,\la_{r-1},2\la_r,\la_r,\la_r+\la_1\}.$$
  For notational convenience we will denote by $\ve=\la_r$ and $\ve^\prime=\la_1+\la_r$.  In addition we adopt the following notation from \cite{Gan}: $\la_i=\ga^i$ for $1\leq i\leq r-1$ and $\ga^r=2\la_r$.   The dimensions of the simple objects are:
$\dim(X_{\mathbf{0}})=\dim(X_{2\la_1})=1$, $\dim(X_{\ga^i})=\dim(X_{\la_i})=2$ for $1\leq i\leq r$, and
$\dim(X_{\ve})=\dim(X_{\ve^\prime})=\sqrt{2r+1}$.

Let us denote by $\tilde{s}(\la,\mu)$ the entry of $\tilde{s}$ corresponding to $X_{\la}$ and $X_{\mu}$.  From \cite{Gan} we compute the following:

\begin{eqnarray*}
&& \ts(2\la_1,2\la_1)=1,\quad \ts(2\la_1,\ga^i)=2, \quad \ts(2\la_1,\ve)=\ts(2\la_1,\ve^\prime)=-\sqrt{2r+1}\\
&&\ts(\ga^i,\ga^j)=4\cos(\frac{2ij\pi}{2r+1}),\quad \ts(\ga^i,\ve)=\ts(\ga^i,\ve^\prime)=0\\
&&\ts(\ve,\ve^\prime)=-\ts(\ve,\ve)=\pm\sqrt{2r+1}
\end{eqnarray*}

The remaining entries of $\tilde{s}$ can be determined by the fact that $\tilde{s}$ is symmetric.  The twist coefficients are as follows:
\begin{eqnarray*} &&\theta_{\ga^j}=(-1)^je^{-\pi i j^2/N},\quad 1\leq j\leq r\\ && \theta_{2\la_1}=1,\quad \theta_{\ve}=-\theta_{\ve^\prime}=e^{\pi i (N-1)/8}.
\end{eqnarray*}
The object $X_{\ve}$ generates $\CC(\so_N,q,2N)$, with tensor product decomposition rules:
\begin{enumerate}
\item $X_{\ve}\ot X_{\ve}=X_{\mathbf{0}}\oplus\bigoplus_{i=1}^{r} X_{\ga^i}$
\item $X_{\ve}\ot X_{\ga^i}=X_{\ve}\oplus X_{\ve^\prime}$ for $1\leq i\leq r$
\item $X_{\ve}\ot X_{\ve^\prime}=X_{2\la_1}\oplus\bigoplus_{i=1}^{r}X_{\ga^i}$
\item $X_{\ve}\ot X_{2\la_1}=X_{\ve^\prime}$
\end{enumerate}
The corresponding Bratteli diagram is similar to that of $\CC(\ssl_2,q,6)$ (indeed, identical for $r=1$), except that instead of a single simple object $Y$ of dimension $2$ we have $r$ of them $\{X_{\ga^i}\}$.

\subsubsection{$N$ even}
For $N=2r$ even the category $\CC(\so_N,q,N)$ has rank $r+7$.
The fundamental weights are denoted $\la_1=(1,0,\ldots,0),\ldots\la_{r-2}=(1,\ldots,1,0,0)$, for $1\leq i\leq r-2$
with $\la_{r-1}=\frac{1}{2}(1,\ldots,1,-1)$ and $\la_{r}=\frac{1}{2}(1,\ldots,1)$ the two fundamental spin representations.  We compute the labeling set for $\CC(\so_N,q,N)$ and order them as follows:
$$
\{\mathbf{0},2\la_1,2\la_{r-1},2\la_r,\la_1,\cdots,\la_{r-2},\la_{r-1}+\la_r,\la_{r-1},\la_r,\la_1+\la_{r-1},\la_1+\la_r\}.$$
For notational convenience we will denote by $\ve_1=\la_{r-1}$, $\ve_2=\la_{r}$, $\ve_3=\la_1+\la_{r-1}$
and $\ve_4=\la_1+\la_r$ and set $\ga^j=\la_j$ for $1\leq j\leq r-2$ and $\ga^{r-1}=\la_{r-1}+\la_r$.  In this notation the dimensions of the simple objects are:
$\dim(X_{\ga^j})=2$ for $1\leq i\leq r-1$, $\dim(X_\mathbf{0})=\dim(X_{2\la_1})=\dim(X_{2\la_{r-1}})=\dim(X_{2\la_{r}})=1$ and $\dim(X_{\ve_i})=\sqrt{r}$ for $1\leq i\leq 4$.

The tensor product rules and $\ts$-matrix for $\CC(\so_N,q,N)$ take different forms depending on
the parity of $r$.
In the case that $r$ is odd, one finds that $X_{\ve_1}$ generates $\CC(\so_N,q,N)$.  All simple objects are self-dual (i.e. $X\cong X^*$) except for $X_{\ve_i}$ $1\leq i\leq 4$, $X_{2\la_{r-1}}$ and $X_{2\la_{r}}$.  The corresponding Bratteli diagram for $r$ odd has the same form as that of $\CC(\so_N,q,2N)$ with $N$ odd except that it is (eventually) cyclically $4$-partite.  

\begin{remark}\label{dualrem} 
We expect that this relationship can be extended to isomorphisms between centralizer algebras and braid group representations (possibly for different values of $q$).  The case $N=3$ is essentially known to experts--we sketch how this might be achieved in principal: first identify $(SO(3),2)$ with $(SU(2),4)$ and then employ rank-level duality to obtain $(SU(4),2)$ which is then identified with $(SO(6),2)$ under the usual $A_3$-$D_3$ Lie algebra homomorphism.  For $N=5$ this relationship may be established using the (semisimple quotients of the) $BMW$-algebras $\CC_m(-i, e^{\pi i/10})$ and the modular categories $\CC(\mathfrak{so}_10,Q,10)$ at $Q=e^{7\pi i/10}$.  The details are not particularly enlightening, but the approach is the following: first determine the $\B_3$-reprentation associated with the simple object $X_{\ve_1}$ in $\CC(\mathfrak{so}_{10},Q,10)$ using \cite{TbW}. Check the defining relations for $\CC_m(-i,e^{\pi i/10})$ are satisfied by the images of the $\B_3$ generators and that the values of the trace coincide with those of the uniquely defined trace on $\CC_m(-i,e^{\pi i/10})$.  The uniqueness of the trace and a dimension count establish the desired isomorphism.
\end{remark}

In the case that $r=2m$ is even all objects are self-dual and the subcategory generated by $X_{\ve_1}$ has $m+5$ simple objects
labelled by:
$$\{\zero,2\la_1,2\la_{r-1},2\la_{r},\ga^{2},\ga^4,\ldots,\ga^{r-2}, \ve_1,\ve_4\}.$$
Similarly the subcategory generated by $X_{\ve_2}$ has $m+5$ simple objects, and the full category is generated by $\{X_{\ve_1},X_{\ve_2}\}$.  Although  we expect these categories to be localizable and have property $\mathbf{F}$, the way forward is somewhat less clear in this case.

\subsection{Gaussian Representations}

For each $(SO(N),2)$, we would like to produce a unitary $R$ matrix that localizes the braid group representations associated with the spin objects $X_{\ve}$ and $X_{\ve_i}$.  One complication for such a localization is that no description of the centralizer algebras $\End((X_{\ve})^{\ot n})$ as quotients of $\C\B_n$ is known, except for $N=3$ (Temperley-Lieb algebras), $N=5$ ($BMW$-algebras, see \cite{jones89}) and $N=7$ (see \cite{West}).  In \cite{dBG} a similar situation is studied corresponding to the rational conformal field theory associated with the WZW theory corresponding to $(SO(N),2)$ for $N$ odd.  The principal graph and Bratteli diagram for their theory is the same as ours which suggests that the braid group image is faithfully represented by the so-called ``Gaussian'' representation.  For simplicity, we will focus on the cases where $N$ is an odd prime, which we will denote it by $p$.

The \textit{Gaussian representation} is defined as follows (see \cite{jones89}).  For an odd prime $p\geq 3$ let $\om$ be a primitive $p$-th root of unity and consider the $\C$-algebras $ES(\om,n-1)$ given by generators $u_1,\ldots,u_{n-1}$ satisfying the relations:
\begin{eqnarray}
 u_i^p&=&1\label{es1}\\
u_iu_{i+1}&=&\om^{-2}u_{i+1}u_i\label{es2}\\
u_iu_j&=&u_ju_i, \quad |i-j|>1\label{es3}
\end{eqnarray}
 Notice that $ES(\om,n-1)$ has dimension $p^{n-1}$ and is semisimple (since it is a group algebra of the group presented by (6.1)-(6.3) with $\omega$ interpreted as a central element).  Thus the left-regular action of $ES(\om,n-1)$ on itself is a faithful representation.  If we defined $u_i^*=u_i^{-1}$ and $Tr(1)=1$ and $Tr(w)=0$ for a word in the $u_i$ not proportional to $1$ we can put a Hilbert space structure on $ES(\om,n-1)$ in such a way to make the left-regular representation unitary.

The Gaussian representation $\gamma_n:\B_n\rightarrow ES(\om,n-1)$ is defined on generators of $\B_{n}$ by
$$\gamma_n(\sigma_i)=\zeta\sum_{j=0}^{p-1}\om^{j^2}u_i^j$$
where $\zeta$ is a normalization factor ensuring that $\gamma_n(\sigma_i)$ is unitary (regarded as an operator on $ES(\om,n-1)$).  It was shown in \cite{GJ} that the image of the braid group under this representation is a finite group.  In fact, for $n$ odd the analysis in \cite{GJ} shows that, projectively, $\gamma_n(\B_n)$ is isomorphic to the finite simple group $PSp(n-1,\mathbb{F}_p)$.

\begin{lemma}
 The Gaussian representations $\gamma_n:\B_n\rightarrow ES(\om,n-1)$ can be localized.
\end{lemma}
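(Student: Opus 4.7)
The plan is to realize the abstract generators $u_i$ of $ES(\om,n-1)$ as explicit local operators on a tensor power, then take the defining formula for $\gamma_n(\sigma_i)$ at face value to produce an $R$-matrix. Let $V=\C^p$ with orthonormal basis $e_0,\ldots,e_{p-1}$, and let $X,Z\in\End(V)$ be the shift and clock operators $Xe_k=e_{k+1}$ and $Ze_k=\om^k e_k$ (indices mod $p$), so that $ZX=\om XZ$ and hence $X^2Z=\om^{-2}ZX^2$. Set $\tilde u:=Z\ot X^2\in\End(V\ot V)$ and define
$$R:=\zeta\sum_{j=0}^{p-1}\om^{j^2}\,\tilde u^{\,j}\in\End(V\ot V)$$
with the same Gauss-sum normalization $\zeta$ (satisfying $|\zeta|^2=1/p$) used in the definition of $\gamma_n$. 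Since $\tilde u$ is unitary, the same Gauss-sum cancellation that makes $\gamma_n(\sigma_i)$ unitary also shows $R$ is unitary.

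Next I would verify that the local operators $\tilde u_i:=I^{\ot i-1}\ot\tilde u\ot I^{\ot n-i-1}\in\End(V^{\ot n})$ satisfy the defining relations \eqref{es1}--\eqref{es3} of $ES(\om,n-1)$. The relation $\tilde u_i^p=I$ is immediate from $Z^p=X^{2p}=I$; the braided commutation $\tilde u_i\tilde u_{i+1}=\om^{-2}\tilde u_{i+1}\tilde u_i$ follows from $X^2Z=\om^{-2}ZX^2$ applied within the shared tensor factor $i+1$; and far commutativity is automatic. Consequently $u_i\mapsto\tilde u_i$ extends to an algebra homomorphism $\phi:ES(\om,n-1)\to\End(V^{\ot n})$, and by construction $R_i^V=\phi(\gamma_n(\sigma_i))$, so $\rho_R=\phi\circ\gamma_n$. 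In particular $R$ automatically satisfies the YBE, $\rho_R$ is a unitary braid representation, and condition (i) of Definition~\ref{localdef} is fulfilled with $\phi_n:=\phi|_{\C\gamma_n(\B_n)}$.

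The only nontrivial remaining step is condition (ii): that $V^{\ot n}$ be a faithful $\C\gamma_n(\B_n)$-module via $\phi_n$. I would deduce this from the stronger claim that $\phi$ itself is injective on all of $ES(\om,n-1)$. Using the commutation relations to normal-order each basis monomial yields
$$\phi(u_1^{a_1}\cdots u_{n-1}^{a_{n-1}})=c(\ma)\,Z^{a_1}\ot Z^{a_2}X^{2a_1}\ot\cdots\ot Z^{a_{n-1}}X^{2a_{n-2}}\ot X^{2a_{n-1}},$$
where $c(\ma)=\om^{-2\sum_{j=2}^{n-1}a_{j-1}a_j}$ is a nonzero scalar and $\ma=(a_1,\ldots,a_{n-1})\in(\Z/p)^{n-1}$. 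Since the $p^2$ Pauli operators $\{Z^aX^b:0\le a,b\le p-1\}$ form a basis of $\End(V)\cong M_p(\C)$, and (using that $2$ is invertible mod $p$) the tuple $\ma$ is uniquely recovered from the right-hand side, the resulting $p^{n-1}$ images are linearly independent in $\End(V^{\ot n})$. Because $\dim ES(\om,n-1)=p^{n-1}$, this forces $\phi$ to be injective.

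Putting these pieces together exhibits $(V,R)$ as a unitary localization of $\{\gamma_n\}$ in the sense of Definition~\ref{localdef}. The main (and essentially only) technical content is the Pauli-basis linear independence used to establish injectivity of $\phi$; the relations \eqref{es1}--\eqref{es3} and the YBE itself are formal consequences of the clock-shift identity $ZX=\om XZ$. Notably, this approach works uniformly for both parities of $n$, even though $ES(\om,n-1)$ is a simple matrix algebra only when $n-1$ is even.
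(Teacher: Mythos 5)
Your proof is correct and follows the same strategy as the paper: realize the generators $u_i$ by an explicit local unitary of order $p$ on $\C^p\ot\C^p$ satisfying the $ES(\om,n-1)$ relations, and then form the $R$-matrix by the Gauss-sum formula $R=\zeta\sum_j\om^{j^2}\tilde{u}^j$. The differences are cosmetic: the paper takes $U(e_i\ot e_j)=\om^{i-j}e_{i+1}\ot e_{j+1}$, which in your clock-shift notation is $ZX\ot Z^{-1}X$ rather than your $Z\ot X^2$ (both satisfy the required relations), and the paper dispatches faithfulness with a one-line appeal to ``a standard trace argument,'' which your normal-ordering and Pauli-basis linear-independence computation makes fully explicit.
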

\begin{proof}
We must find a unitary $p^2\times p^2$ matrix $U$ so that $U_i:=Id^{\ot i-1}\ot U\ot Id^{\ot n-i-1}$ satisfy (\ref{es1}-\ref{es3}).  For this let $\{\be_i\}_{i=1}^p$ denote the standard basis for $\C^p$ and define
\begin{equation}\label{Udef}
U(\be_i\ot\be_j)=\om^{i-j}\be_{i+1}\ot\be_{j+1}
\end{equation}
where the indices on $\be_{i}$ are to be taken modulo $p$.
It is straightforward to check that $U^p=I$ and $U^*=U^{-1}$ (conjugate-transpose).  It is clear that $U_i$ and $U_j$ commute if $|i-j|>1$.  All that remains then is to check (\ref{es2}).  For this it is enough to consider $i=1$:
\begin{eqnarray*}
 &&U_1U_2(\be_i\ot\be_j\ot\be_k)=\om^{i-k-1}\be_{i+1}\ot\be_{j+1}\ot\be_{k+1}=\\
&&\om^{-2}\om^{i-k+1}\be_{i+1}\ot\be_{j+1}\ot\be_{k+1}=\om^{-2}U_2U_1(\be_i\ot\be_j\ot\be_k)
\end{eqnarray*}
so $u_i\rightarrow U_i$ does give a representation of $ES(\om,n-1)$.  A standard trace argument shows that this representation is faithful.  Thus defining
$$R=\zeta\sum_{j=0}^{p-1}\om^{j^2}U^j$$ gives an $R$-matrix localizing the Gaussian representation.
\end{proof}

\begin{conj}
Let $X$ be either the simple spin object $X_{\ve}$ in (the unitary modular tensor category) $(SO(p),2)$ or the simple spin object $X_{\ve_i}$ in $(SO(2p),2)$ for an odd prime $p\geq 3$.  Then the representations $(\rho^X_n,\End(X^{\ot n}))$ are equivalent to the Gaussian representations.  It would follow that, for an odd prime $p\geq 3$, the $(SO(p),2)$ and $(SO(2p),2)$ TQFTs have property \textbf{F} and could be localized in a weak sense.
\end{conj}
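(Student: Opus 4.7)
The plan is to produce an explicit algebra isomorphism between the image of $\B_n$ under $\rho^X_n$ in $\End(X^{\ot n})$ and the image of $\B_n$ under the Gaussian representation $\gamma_n$ in $ES(\om,n-1)$, and then read off the two consequences. First I would compute the spectrum of $\rho^X_n(\sigma_i)$ using the ribbon/hexagon structure: the restriction of $\rho^X_n(\sigma_i)$ to each simple summand $Y$ of $X\ot X$ is a scalar of the form $\pm\theta_Y/\theta_X^2$. With $X=X_{\ve}$ and the fusion rule $X_{\ve}\ot X_{\ve}\cong X_{\zero}\oplus\bigoplus_{j=1}^{r}X_{\ga^j}$ together with the twist data $\theta_{\ga^j}=(-1)^je^{-\pi i j^2/p}$ listed above, this gives exactly $(p+1)/2=r+1$ distinct eigenvalues. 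Running the same calculation on $\gamma_n(\sigma_i)=\zeta\sum_j\om^{j^2}u_i^j$ and diagonalising in the eigenbasis of $u_i$ reduces to a classical Gauss-sum computation: after completing the square in $j^2+jk$, the eigenvalue corresponding to $u_i$-eigenvalue $\om^k$ becomes $\zeta g\,\om^{-k^2/4}$ (with $g=\sum_j\om^{j^2}$), so the eigenvalue set is also indexed by $k^2\bmod p$ and has size $(p+1)/2$. Choosing $\zeta$ and an overall phase, the two spectra can be matched identically.

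Once the spectra agree, $\rho^X_n(\sigma_i)$ satisfies the same monic degree-$p$ minimal polynomial as $\gamma_n(\sigma_i)$, so I can Lagrange-interpolate and define elements $\tilde{u}_i\in\C\rho^X_n(\B_n)$ by the same polynomial formula that recovers $u_i$ from $\gamma_n(\sigma_i)$. The relations $\tilde{u}_i^p=1$ and $\tilde{u}_i\tilde{u}_j=\tilde{u}_j\tilde{u}_i$ for $|i-j|>1$ follow at once from the spectral identity and from $\sigma_i\sigma_j=\sigma_j\sigma_i$. The crux is verifying the $\om$-commutation $\tilde{u}_i\tilde{u}_{i+1}=\om^{-2}\tilde{u}_{i+1}\tilde{u}_i$, a relation that lives entirely inside the image of $\B_3$. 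I would check it on $\End(X_{\ve}^{\ot 3})$ by decomposing into the simple summands dictated by the fusion rules and computing explicitly with the $F$-matrices (quantum $6j$-symbols), drawing on \cite{TbW} and the $(SO(p),2)$ $S$- and twist-data gathered above. Granted this, $u_i\mapsto\tilde{u}_i$ extends to a unital algebra homomorphism $ES(\om,n-1)\to\C\rho^X_n(\B_n)$.

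To upgrade this homomorphism to an isomorphism, I would compare Bratteli diagrams. As noted in the excerpt, the principal graph of the tower generated by the Gaussian representation matches the $(SO(p),2)$ Bratteli diagram stemming from $X_{\ve}\ot X_{\ve}$, a match already observed in \cite{dBG,jonessurvey}. Combined with semisimplicity of both towers and the surjectivity of the constructed map onto $\C\rho^X_n(\B_n)$, a dimension count on each sector forces injectivity, hence the equivalence of the Gaussian representation and $\rho^X_n$ up to multiplicities. For the $(SO(2p),2)$ case I would first use the duality sketched in Remark \ref{dualrem} to reduce to the $(SO(p),2)$ setting, then repeat the argument. The two stated consequences are then immediate: property \textbf{F} transfers because $\gamma_n(\B_n)$ is finite by \cite{GJ} (in fact projectively $PSp(n-1,\mathbb{F}_p)$ for odd $n$), and the $p^2\times p^2$ unitary $R$-matrix $R=\zeta\sum_j\om^{j^2}U^j$ from the preceding lemma localizes $\gamma_n$ on $(\C^p)^{\ot n}$, which yields a weak localization of $(\rho^X_n,\End(X^{\ot n}))$ in the sense that each simple $\B_n$-summand of $\End(X^{\ot n})$ appears with positive (possibly non-uniform) multiplicity in $(\C^p)^{\ot n}$. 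The main obstacle is the verification of the $\om$-commutation relation at the three-strand level for general odd prime $p$: without an abstract presentation of the centralizer algebras $\End(X_{\ve}^{\ot n})$ as a quotient of $\C\B_n$ (known only for $p=3,5,7$ via Temperley--Lieb, $BMW$, and the algebras of \cite{West}), this step requires an ad hoc quantum $6j$-symbol computation or a rank-level duality argument, neither of which is presently available in uniform form.
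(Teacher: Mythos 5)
The statement you were asked to prove is, in the paper itself, a \emph{conjecture}: the authors provide no proof of it, only a remark verifying it for $p=3$, $5$, and the $(SO(7),2)$ case by identifying the relevant centralizer algebras with Temperley--Lieb, $BMW$, and Westbury's $\mathfrak{so}_7$ invariant-tensor algebras respectively, plus a brief sketch (explicitly deferred to a future publication) of an eigenvalue-matching strategy. So there is no paper proof against which to compare your attempt.

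Taken on its own terms, your proposal is a reasonable proof \emph{plan}, and it is essentially the same programme the authors sketch in the paragraph immediately following the conjecture: compute the spectrum of a single braid generator on both sides (using twist data and fusion rules on the categorical side, and a Gauss-sum diagonalisation of $\gamma_n(\sigma_i)=\zeta\sum_j\om^{j^2}u_i^j$ on the algebraic side), exploit the odd/even diagonal bases and the $F$-matrix change of basis, Lagrange-interpolate to produce candidate elements $\tilde u_i$ in $\C\rho^X_n(\B_n)$, verify the $ES(\om,n-1)$ relations, and then use the Bratteli-diagram match and a dimension count to upgrade the surjection to an isomorphism. Your deduction of the two stated consequences from the main claim is also correct: property $\textbf{F}$ follows from the finiteness of $\gamma_n(\B_n)$ (Goldschmidt--Jones), and weak localizability follows from the $p^2\times p^2$ unitary $R$-matrix $R=\zeta\sum_j\om^{j^2}U^j$ constructed in the preceding lemma.

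However, you have not actually proved anything, and you candidly say so. The genuine gap is exactly the one you name: verifying the $\om$-commutation $\tilde u_i\tilde u_{i+1}=\om^{-2}\tilde u_{i+1}\tilde u_i$ on $\End(X^{\ot 3})$ for a general odd prime $p$. Matching spectra alone determines each $\rho^X_n(\sigma_i)$ up to conjugacy, but it does \emph{not} pin down the relative position of $\rho^X_n(\sigma_i)$ and $\rho^X_n(\sigma_{i+1})$; that relative position is encoded in the $F$-matrices of $(SO(p),2)$, which are not known in closed form for general $p$. Without either explicit $6j$-symbols or a uniform presentation of $\End(X_\ve^{\ot n})$ as a $\C\B_n$-quotient (known only for $p=3,5,7$), this step cannot be carried out, which is precisely why the paper states the result as a conjecture rather than a theorem. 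Your identification of this obstruction is correct and is the honest reading of the situation.
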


\begin{remark}
 This conjecture holds for $p=3$ and $5$, and for $(SO(7),2)$.  For $p=3$ and $p=5$ this goes back to Jones' papers \cite{J86,jones89} (see Remark \ref{dualrem} for the $SO(2p)$ cases).  For $(SO(7),2)$ one must rely upon the description of centralizer algebras $\End(X_\ve^{\ot n})$ for $U_q\so_7$ in \cite{West}.  This description is in terms of generators $K_i,H_i$ and $U_i$ where $1\leq i\leq n-1$ and some 34 sets of relations.  Fortunately these relations need only be verified for $n=3$, but it is a monumentously tedious calculation to check that the Gaussian representation does indeed satisfy these relations for $q=e^{\pi i/14}$.  In terms of the generators $u_i$ defining the Gaussian representation one takes: $$U_i=\frac{1}{\sqrt{7}}\sum_{j=0}^6(q^4u_i)^j,\quad K_i=\frac{[2]^2[3]}{7[3]}\sum_{j=0}^6 (u_i)^j$$
and $R_i=\frac{1}{\sqrt{-7}}\sum_{j=0}^6 q^{4j^2}(u_i)^j$ represents $\sigma_i$. The image of $H_i$ can be determined from the above as: 
$$H_i=q^{-2} +q^5(q+q^{-1})R_i+q^3U_i-qK_i.$$
\end{remark}

We would like to localize braid representations from $(SO(p),2)$ and $(SO(2p),2)$ in the stronger sense as for Jones representations at level $2$ and $4$ (see Remark \ref{defrems} after Definition \ref{localdef}).  The conjecture does not lead to such a localization because the braid group representations associated with the spin objects $X_{\ve}$ and $X_{\ve_i}$ would be equivalent to the Gaussian representations as abstract $\B_n$-representations.  The stronger localization could be achieved if explicit $F$-matrices for $(SO(p), 2)$ and $SO(2p),2)$ were known.

We believe that the braid group representations $(\rho^X_n,\End(X^{\ot n}))$ associated with the spin objects $X_{\ve}$ and $X_{\ve_i}$ are determined by knowing the eigenvalues of a single braid generator $\sigma_i$ as they are all conjugate to each other.  As explained in Chap. $2$ of \cite{Wang}, there are two bases for such representations, $\{e_i^{odd}\}$ and $\{e_i^{even}\}$ such that all odd and even braid generators $\{\sigma_j\}, j=odd$ and $j=even$ are diagonal on $\{e_i^{odd}\}$ and $\{e_i^{even}\}$, respectively.  There is a change of basis matrix between the two bases.  When $p$ is prime, the eigenvalues of the braid generators can be calculated for those representations by using the fusion rules and the graphical calculus of $\{e_i^{odd}\}$ and $\{e_i^{even}\}$.  The eigenvalues of braid generators for the Gaussian representations can be obtained from \cite{jones89,dBG}.  We could then compare the two representations with each other.  We leave this for a future publication.

\section{Applications}

The sequence of unitary TQFTs $(SO(N),1) \cong (SO(N+16),1)$ with topological central charge $c=\frac{N}{2} \;mod\; 8$ has been studied extensively in condensed matter physics as model anyon theories.  The theory $(SO(3),1) \cong (SU(2),2)$ is a leading candidate for the description of anyon statistics in fractional quantum Hall (FQH) liquid at filling fraction $\nu=5/2$ \cite{DFN}.

As observed in \cite{jones89}, the Gaussian representations can be Baxterized by the Fateev-Zamolochikov models in statistics mechanics \cite{FZ,KMM}.  It has been shown that all resulting link invariants from $(SO(N),2)$ TQFTs have definitions in classical topology and can be computed efficiently by Turing machines \cite{jones89,KMM}.  These results provide evidence to our conjectures.

The theory $(SO(3),2) \cong (SU(2),4)$ has been proposed as a description of the FQH liquid at filling fraction $\nu=8/3$ \cite{BW}.  We believe a similar proposal can be made for $(SO(p),2)$ for filling fraction $\nu=2+2/p$ when $p$ is prime.  Of particular interests is the $p=5$ case due to the existence of the FQH liquid at $\nu=12/5$.

Non-abelian statistics of $3$-dimensional extended objects are proposed for a projective representation of the ribbon permutation groups in \cite{FHNQWW}.  Ribbon permutation groups are $3$-dimensional remnants of the braid groups.  They play a role as braid groups does for anyon statistics in dimension two.  We may ask which anyon theory potentially would have a $3$-dimensional ghost image.  The ribbon permutation groups correspond to the Ising anyon theory.  From the analysis in \cite{FHNQWW}, groups that generalize ribbon permutation groups must be finite.  Therefore, Fibonacci theory cannot have a $3$-dimensional ghost image in this sense.  We would speculate that only Property \textbf{F} TQFTs are potential candidates for $3$-dimensional statistics.  In particular, extensions of extra-special or nearly-extra-special $p$-groups by the permutation groups $S_n$ are potential candidates.  How to realize them as in \cite{FHNQWW} is an interesting question.

It follows from the Gottesman-Knill theorem that topological quantum computing models from Ising theory can be efficiently simulated by Turing machines.  We would conjecture that quantum computing models from Property \textbf{F} TQFTs can all be efficiently simulated by Turing machines.  Are there models from Property TQFTs that can implement the Grover algorithm?  We would conjecture the answer to be no, at least for models based on the Ising theory.


\begin{thebibliography}{AA}

\bibitem[AS]{AS} N. Andruskiewitsch; H.-J. Schneider,\emph{ Pointed Hopf algebras.}  New directions in Hopf algebras,  1--68, Math. Sci. Res. Inst. Publ., 43, Cambridge Univ. Press, Cambridge, 2002.

%\bibitem[AH]{AsaedaH}M.\ Asaeda; U.\ Haagerup, \emph{Exotic subfactors of finite depthwith Jones indices $(5+\sqrt{13})/2$
%and $(5+\sqrt{17})/2$}.Comm. Math. Phys. 202 (1999), no. 1, 1--63.


%\bibitem[Ba]{Bantay1}P.\ Bantay, \emph{The Frobenius-Schur %indicator in conformal field theory}. Phys.
%Lett. B 394 (1997), no. 1-2, 87--88.

%\bibitem[Ba2]{Bantay2}P.\ Bantay, \emph{The kernel of the modular representation and the
%Galois action in RCFT.} Comm. Math. Phys. 233 (2003), no. 3,423--438.

%\bibitem[Bo]{Bonderson}P.\ Bonderson, Non-abelian anyons and interferometry, Caltech Ph.Dthesis, 2007.

%\bibitem[BH]{Haldane}B.\ Bernevig; D.\ Haldane, \emph{Fractional Quantum Hall States and Jack Polynomials},arXiv: 0707.3637.

%\bibitem[BB]{Bais}F.\ Bais; P.\ Bouwknegt, \emph{Aclassificiation of subgroup truncations of the bosonic string},Nucl. Phys. B 279, 561(1987).

%\bibitem[BM]{BM}D.\ Belov; G.\ Moore, \emph{Classification of spin abelian Chern-Simons theories},
%hep-th/0505235.

\bibitem[BK]{BK} B.\ Bakalov; A.\ Kirillov, Jr., {\em Lectures on
Tensor Categories and Modular Functors}, University Lecture
Series, vol.\ {\bf 21},  Amer.\ Math.\ Soc., 2001.

\bibitem[BW]{BW}M.\ Barkeshli; X.-G.\ Wen, {\em
Anyon Condensation and Continuous Topological Phase Transitions in Non-Abelian Fractional Quantum Hall States,}
arXiv:1007.2030.

\bibitem[ESS]{ESS}P.\ Etingof; T.\ Schedler; A.\ Soloviev, {\em Set-theoretical solutions to the quantum Yang-Baxter equation.} Duke Math. J. 100 (1999), no. 2, 169--209.


\bibitem[NSSFD]{DFN}C.\ Nayak; S.\ H.\ Simon; A.\ Stern; M.\ Freedman; S.\ Das Sarma,
\emph{Non-Abelian Anyons and Topological Quantum Computation,} Rev. Mod. Phys. \textbf{80} (2008) no. 3, 1083--1159.

\bibitem[dBG]{dBG} J.\ de Boer; J. \ Goeree, {\em Markov traces and ${\rm II}_1$ factors in conformal field theory}.
Comm. Math. Phys. \textbf{139} (1991), no. 2, 267--304.

\bibitem[DGNO]{DGNO2} V.\ Drinfeld; S.\ Gelaki; D.\ Nikshych; V.\ Ostrik, \emph{On braided fusion categories I} arXiv:0906.0620.

\bibitem[D]{Dye} H. Dye, \emph{Unitary solutions to the Yang-Baxter
equation in dimension four}. Quantum information processing \textbf{2} (2002)
nos. 1-2, 117--150 (2003).

\bibitem[ENO]{ENO}P.\ Etingof; D.\ Nikshych; V.\ Ostrik,  \emph{On fusion
categories}. Ann. of Math. (2) 162 (2005), no. 2, 581--642.

\bibitem[ERW]{ERW} P.\ Etingof; E.\ C.\ Rowell; S.\ J.\ Witherspoon,
\emph{Braid group representations from quantum doubles of finite groups.} Pacific J. Math. \textbf{234}  (2008) no. 1, 33--41.

\bibitem[Fr]{frankothesis} J. Franko, Braid group representations via the Yang Baxter equation. Thesis, Indiana University 2008.

\bibitem[FRW]{FRW}  J.\ Franko; E.\ C.\ Rowell; Z.\ Wang, Extraspecial 2-groups and images of braid group
representations, \emph{J. Knot Theory Ramifications} \textbf{15} (2006) no. 4, 1--15.

\bibitem[F1]{Freedman1}M.\ H. \ Freedman, \emph{P/NP, and the quantum field computer}. Proc. Natl. Acad. Sci. USA \textbf{95} (1998) no. 1, 98--101.

\bibitem[F2]{Freedman2}M.\ H. \ Freedman, \emph{A magnetic model with a possible Chern-Simons phase}. With an
appendix by F. Goodman and H. Wenzl. Comm. Math. Phys. \textbf{234} (2003) no. 1, 129--183.


\bibitem[FHNQWW]{FHNQWW}M.\ Freedman, M.\ B. \ Hastings, C.\ Nayak, X.-L.\ Qi, K.\ Walker, Z.\ Wang
{\em Projective Ribbon Permutation Statistics: a Remnant of non-Abelian Braiding in Higher Dimensions.}
arXiv:1005.0583


\bibitem[FKLW]{FKLW}M.\ Freedman; A.\ Kitaev; M.\ Larsen; Z.\ Wang,
\emph{Topological quantum computation.}
Bull. Amer. Math. Soc.
(N.S.) \textbf{40} (2003) no. 1, 31--38.

\bibitem[FKW]{FKW}M.\ H.\ Freedman; A.\ Kitaev; Z.\ Wang,
\emph{Simulation of topological field theories by quantum
computers.} Comm. Math. Phys. \textbf{227} (2002) no. 3, 587--603.

\bibitem[FLW1]{FLWu} M.\ H.\ Freedman; M.\ J.\ Larsen; Z.\ Wang,
    \emph{A modular functor which is universal for quantum computation.} Comm. Math. Phys. \textbf{227} (2002) no. 3, 605--622.

\bibitem[FLW2]{FLW} M.\ H.\ Freedman; M.\ J.\ Larsen; Z.\ Wang,
    \emph{The two-eigenvalue problem and density of Jones representation of braid groups.}
    Comm.\ Math.\ Phys.\ {228} (2002), 177-199.

\bibitem[FZ]{FZ} V.\ A.\ Fateev; A.\ B.\ Zamolodchikov, {\em
Self-dual solutions of the star-triangle relations in $Z_{N}$-models.}
Phys. Lett. A 92 (1982) no. 1, 37--39.


%\bibitem[FW]{FW}M.\  H. \ Freedman; Z.\ Wang, \emph{Large quantum Fourier transforms are never exactly realized by braiding conformal blocks}.
 %Phys. Rev. A (3) 75 (2007), no. 3, 032322, 5 pp.

\bibitem[Gn]{Gan} T.\ Gannon, \emph{The level 2 and 3 modular invariants for the orthogonal algebras}, Canad. J. Math. \textbf{52} (2000) no. 3, 503-521.

\bibitem[Gt]{Gant} F. R. Gantmacher, The Theory of Matrices, v. 2, Chelsea, New
York, 1959.

\bibitem[GJ]{GJ} D.\ M.\ Goldschmidt and V.\ F.\ R.\ Jones,
    \textit{Metaplectic link invariants.}
    Geom.\ Dedicata \textbf{31} (1989)  no.\ 2, 165--191.

 \bibitem[GHJ]{GHJ} F. M.
Goodman, P. de la Harpe, V. F. R. Jones, Coxeter graphs and towers of algebras. Mathematical Sciences Research Institute Publications, 14. Springer-Verlag, New York, 1989. x+288 pp.

\bibitem[H]{Hiet} J. Hietarinta, \emph{All solutions to the constant
quantum Yang-Baxter equation in two dimensions.}  Phys. Lett.
A, \textbf{165} (1992), 2452-52.

\bibitem[J1]{J86} V.\ F.\ R.\ Jones, \emph{Braid groups, Hecke algebras and type ${\rm II}\sb 1$ factors,}Geometric methods in
operator algebras (Kyoto, 1983), 242--273, Pitman Res.\ Notes Math.\ Ser.\ {\bf 123}, Longman Sci. Tech., Harlow, 1986.

\bibitem[J2]{jones87} V.\ F.\ R.\ Jones, \emph{Hecke algebra representations of braid groups and link polynomials.}  Ann. of Math. (2)  \textbf{126}  (1987)  no. 2, 335--388.

\bibitem[J3]{jonessurvey} V.\ F.\ R.\ Jones, Notes on subfactors and statistical mechanics.  Braid group, knot theory and statistical mechanics,  1--25, Adv. Ser. Math. Phys., 9, World Sci. Publ., Teaneck, NJ, 1989.

\bibitem[J4]{jones89} V.\ F.\ R.\ Jones, \emph{On a certain value of the Kauffman polynomial,} Comm. Math. Phys. \textbf{125} (1989), 459-467.

\bibitem[Ks]{Kas} C.\ Kassel, Quantum Groups. Graduate Texts in Mathematics \textbf{155}, Springer-Verlag, New York, 1995.

\bibitem[K1]{Kitaev}A.\ Kitaev, {\em Anyons in an exactly solved model and beyond.} Ann. Physics 321 (2006), no. 1, 2--111.

\bibitem[KMM]{KMM}T.\ Kobayashi; H.\ Murakami; J.\ Murakami, {\em
Cyclotomic invariants for links.}
Proc. Japan Acad. Ser. A Math. Sci. 64 (1988), no. 7, 235--238.


\bibitem[LR]{LR1}  M.\ J.\ Larsen, E.\ C.\ Rowell, \emph{An algebra-level version of a link-polynomial identity of Lickorish}. Math.
Proc. Cambridge Philos. Soc. \textbf{144} no. 3 (2008), 623--638.

\bibitem[LRW]{LRW}  M.\ J.\ Larsen, E.\ C.\ Rowell, and Z.\ Wang:
    \emph{The $N$-eigenvalue problem and two applications.}
    \textit{Int.\ Math.\ Res.\ Not.} \textbf{2005} (2005) no.\ 64, 3987--4018.

%\bibitem[MR]{MR}G.\ Moore; N.\ Read, \emph{Nonabelions in the fractional quantum hall
%effect,} Nuclear Physics B, Volume 360, Issue 2-3, p. 362-396.

\bibitem[LZ]{LZ} G.\ Lehrer; R.\ Zhang, \emph{Strongly multiplicity free modules for Lie algebras and quantum groups}, J. Algebra \textbf{306} (2006), 138--174.

\bibitem[NR]{propF} D.\ Naidu; E.\ C.\ Rowell, \emph{A finiteness property
for braided fusion categories}, to appear in Algebr. Represent. Theory

\bibitem[R1]{Rsurvey} E.\ C.\ Rowell, \emph{From quantum groups to unitary modular tensor categories}
in Contemp.\ Math.\ \textbf{413} (2006), 215--230.

\bibitem[R2]{RUMA} E.\ C.\ Rowell, \emph{Braid representations from quantum groups of exceptional Lie type},
Rev. Un. Mat. Argentina
\textbf{51} (2010) no. 1, 165�175.

\bibitem[R3]{R3} E.\ C.\ Rowell, \emph{Two paradigms for topological quantum computation}. Advances in quantum computation, 165--177, Contemp. Math., 482, Amer. Math. Soc., Providence, RI, 2009.

\bibitem[RSW]{RSW} E.\ Rowell; R.\ Stong; Z.\ Wang, \textit{On classification of modular tensor categories},  Comm. Math. Phys.  \textbf{292}  (2009),  no. 2, 343--389.
\bibitem[RZWG]{RZWG} E.\ C.\ Rowell; Y.\ Zhang; Y.-S.\ Wu; M.-L.\ Ge, \textit{Extraspecial two-groups, generalized Yang-Baxter equations and braiding quantum gates}, {Quantum Inf. Comput.} \textbf{10} (2010) no. 7-8, 0685--0702.

\bibitem[Setal]{Xue} C.\ Sun, G.\ Wang, T.\ Hu, C.\ Zhou, Q.\ Wang and K.\ Xue \emph{The representations of Temperley-Lieb algebra and entanglement in a Yang-Baxter system,} Internat. J. Quant. Inf.
 \textbf{7} (2009) no. 6, 1285--1293.

\bibitem[TbW]{TbW} I. Tuba and H. Wenzl,  \textit{Representations of the braid group $\rm{B}_3$  and of $\rm{SL}(2,\rm{Z})$}. Pacific J. Math. \textbf{197} (2001), no. 2, 491--510.
\bibitem[Tu1]{turaevR} V.\ Turaev, \emph{The Yang-Baxter equation and invaraints of links.} Invent. Math. \textbf{92}
(1988), 527-553.
\bibitem[Tu2]{Turaev}V.\ Turaev, Quantum Invariants of Knots and 3-Manifolds, De Gruyter Studies in
Mathematics, Walter de Gruyter (July 1994).

\bibitem[TW]{TurWen97} V.\ G.\ Turaev; H.\ Wenzl, \emph{Semisimple and modular
tensor categories from link invariants}, Math. Ann. \textbf{309}
(1997), 411-461.

\bibitem[Wa]{Wang}Z.\ Wang, Topological quantum computation. CBMS Regional Conference Series in Mathematics, 112. American Mathematical Society, Providence, RI, 2010. xiv+115 pp.

%\bibitem[Wa]{Wang}Z.\ Wang, Topologization of electron liquids with Chern-Simons theory and quantum computation.
%Differential geometry and physics, 106--120, Nankai Tracts Math.,
%10, World Sci. Publ., Hackensack, NJ, 2006.

\bibitem[W1]{wenzl88} H.\ Wenzl, \emph{Hecke algebras of type $A_n$ and subfactors.}  Invent. Math.  \textbf{92}  (1988)  no. 2, 349--383.
\bibitem[W2]{wenzlbcd} H.\ Wenzl,
    \emph{Quantum groups and subfactors of type $B$, $C$, and $D$,}
    Comm. Math. Phys. \textbf{133}  (1990)  no. 2, 383--432.
\bibitem[Wb]{West} B.\ W.\ Westbury, \textit{Invariant tensors for the spin representation of $\mathfrak{so}(7)$}.  Math. Proc. Cambridge Philos. Soc.  \textbf{144} (2008)  no. 1, 217--240.


\end{thebibliography}
\end{document}